\documentclass[pdflatex,sn-mathphys-num]{sn-jnl}

\usepackage{graphicx}%
\usepackage{multirow}%
\usepackage{amsmath,amssymb,amsfonts}%
\usepackage{amsthm}%
\usepackage{mathrsfs}%
\usepackage[title]{appendix}%
\usepackage{xcolor}%
\usepackage{textcomp}%
\usepackage{manyfoot}%
\usepackage{booktabs}%
\usepackage{algorithm}%
\usepackage{algorithmicx}%
\usepackage{algpseudocode}%
\usepackage{listings}%

\theoremstyle{thmstyleone}%
\newtheorem{theorem}{Theorem}
\newtheorem{proposition}[theorem]{Proposition}%
\newtheorem{conjecture}[theorem]{Conjecture}
\newtheorem{corollary}[theorem]{Corollary}
\newtheorem{lemma}[theorem]{Lemma}

\theoremstyle{thmstyletwo}%
\newtheorem{example}{Example}%
\newtheorem{remark}{Remark}%

\theoremstyle{thmstylethree}%
\newtheorem{definition}{Definition}%

\newcommand{\bb}[1]{\mathbb{#1}}
\newcommand{\conv}{\operatorname{conv}}

\begin{document}

\title[Sets Reconstructable with Medial Axis]{Sets Reconstructable with Medial Axis}


\author{\fnm{Adam} \sur{Białożyt}}\email{bialozyt@agh.edu.pl}


\affil{\orgdiv{Faculty of Applied Mathematics}, \orgname{AGH University of Krak\'ow}, \orgaddress{\street{al. Mickiewicza 30}, \city{Krak\'ow}, \postcode{30-059}, \country{Poland}}}

\abstract{The medial axis of a closed set is well established tool in pattern recognition, cherished for its power of reconstruction of shapes. Yet, a precise description of what the shape means in this context is missing in the literature. In this article we fill this gap answering the question which sets precisely are reconstructible from the medial axis information. We introduce the notion of a reconstructible point, being the point in the union of open balls of maximal radius, centred at points of the medial axis. The reconstructible points turn to be precisely the ones lying in the convex hull of the initial set and ones in the half-spaces supported by hyper-planes for which the convex hull and set have different intersection. We do not assume any additional structure of considered sets besides them being closed in $n-$dimensional Euclidean space. }

\keywords{Medial Axis, Central Set, Set Reconstruction}


\maketitle

\section{Introduction}\label{sec1}
The idea of reducing a data set prior to analysis is ubiquitous in science. A celebrated family of such preprocessing techniques is the so-called skeletonization of a set.

Generally speaking, a vast array of different types of skeletal structures are used in data processing and pattern recognition. One of the commonly renowned systems of thinning is the medial axis transform. The medial axis of a closed set $X$ is a collection of points in the ambient space with more than one closest point in $X$ (see Definition~\ref{Def.MedialAxis} below and the example in Fig.~\ref{Fig.Szkielet}). Although the medial axis and other skeletal structures appear in the works of other mathematicians (Erdos, Federer, ... \cite{Erdos,Federer}), Harry Blum is typically considered to be their originator. In \cite{Blum} he argues for their feasibility in the fields of pattern recognition. One of the reasons why the medial axis gained its importance is because it properly encodes the shape of $X$. The medial axis is a strong topological retract and preserves the homotopy groups of $X^c$.


Common knowledge says that the medial axis of $X$ paired with the distance function allows one to reconstruct shapes. However, What does the shape mean here? And are we really sure we can reconstruct any? These questions appear to be swept under the rug. In this article, we will look at this question in more detail.

The article is organised as follows. In section 2 we provide the setting and notations of our investigations, the definition of the medial axis, and some similar notions. Next, in section 3, we discuss the reconstructability of points in $\bb{R}^n$ and present an example that requires for analysis all the presented theorems. In section~\ref{Sec.Stability} we quickly comment on the stability of reconstructible points. Lastly, we briefly discuss the possibility of adaptation of the results to other contexts and problems that might arise during that process.

\section{Basic definitions}\label{sec2}
Throughout the article, we assume that $X$ is a non-empty closed subset of $\mathbb{R}^n$ endowed with the Euclidean distance. Since we will use the symbol $d(\cdot, \cdot)$ to denote the distance to the set, we will stick to the norm notation $\|a-x\|$ to signify the distance of points $a$ and $x$. By $[a,x]$ (respectively, by $]a,x]$ and $]a,x[$) we denote the closed segment that joins $x$ and $a$ (resp. the segments $[a,x]\backslash \{a\}$ and $[a,x]\backslash \{a,x\}$).  

We will denote by $\bb{B}(a,r)$ an open ball of radius $r>0$ which is centred at point $a\in \mathbb{R}^n$; by $\bb{S}(a,r)$ we will denote the boundary of $\bb{B}(a,r)$, that is, the $(n-1)$-dimensional sphere of the same radius and co-centric with $\bb{B}(a,r)$.

Whenever in the paper the continuity (or upper and lower limits) of a family of sets or of a (multi)function is mentioned, it refers to the continuity (or upper and lower limits) in the Kuratowski sense. Let us quickly recall that given a family $\lbrace X_t\rbrace_{t\in\mathbb{R}^k}$ of subsets of $\mathbb{R}^n$, and a point $x\in \mathbb{R}^n$:
\[x\in\limsup_{t\rightarrow t_0} X_t\text{ iff there exist sequences }\mathbb{R}^k\ni t_\nu\rightarrow t_0\text{ and }X_{t_\nu}\ni x_\nu\rightarrow x;\]
\[x\in\liminf_{t\rightarrow t_0} X_t\text{ iff for each sequence }\mathbb{R}^k\ni t_\nu\rightarrow t_0\text{ we can find }X_{t_\nu}\ni x_\nu\rightarrow x.\]
Naturally, whenever two partial limits coincide, we call this set the Kuratowski limit. More on the Kuratowski convergence is found in \cite{RockafellarWets}, and an introduction to its relation with medial axes and conflict sets is given in \cite{BiaDenk_Kuratowski}.

\subsection{Medial axis and other skeletal structures}
Despite the popularity of the skeleton-like structure concepts, there is a bit of confusion as to what we should really call the medial axis. The literature varies when it comes to the details of the definition (or sometimes does not provide it at all). These details give rise to similar, however, not exactly the same objects.

To make things precise for us, let us call the medial axis nothing else but the locus of points in $\mathbb{R}^n$ that have more than one closest point in $X$. We will adopt the following notation. For a non-empty closed set $X$ and a point $a\in \mathbb{R}^n$ we denote by $d(a,X)$ the Euclidean distance from $a$ to $X$, that is, the infimum of $\|a-x\|$ taken over all points in $X$
\[d(a,X)=d(a):=\inf_{x\in X}\|a-x\|.\] 
Next, we collect the points of $X$ closest to $a\in \bb{R}^n$ in a set $m(a)$. 
\[
m_X(a) = m(a):=\{x\in X\mid \|a-x\|=d(a,X)\}\footnote{Even though, $m(a)$ is formally a set of points, we often identify it with a point of $\bb{R}^n$ when $\# m(a)=1$}.
\]
\begin{definition}\label{Def.MedialAxis}
    The medial axis of $X$ denoted by $M_X$ is the set of points $a\in \mathbb{R}^n$ for which $m(a)$ has more than one point.
    \[
    M_X:=\{a\in \mathbb{R}^n\mid \#m(a)>1\}.
    \]
\end{definition}

\begin{figure}
\centering
\includegraphics[width = 0.4\textwidth]{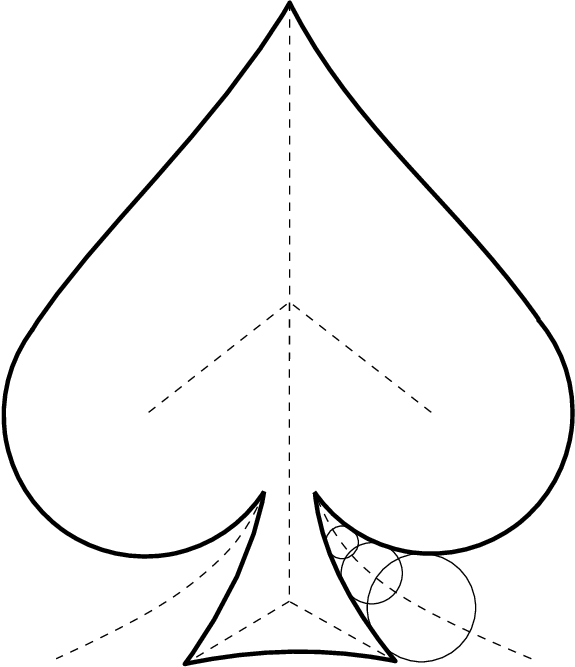}\label{Fig.Szkielet}
\caption{A medial axis (in a dashed line) of a spade shape. Three maximal balls with centres at points of the medial axis are drawn at the bottom right }
\end{figure}
Observe that adopting this point of view has consequences in $M_X\cap X=\emptyset$. The medial axis forms sort of an exoskeleton of a closed set.

\subsubsection{Central set}
A notion used simultaneously with the medial axis is the one of the central set. Its precise construction is as follows. For a closed set $X\subset\bb{R}^n$ consider a family \[\mathcal{B}_X:=\{\bb{B}(a,r)\mid a\in\mathbb{R}^n,\, r>0,\, \bb{B}(a,r)\subset X^c\}.\] Naturally, the family $\mathcal {B}_X$ is partially ordered by a relation of inclusion. We call a ball $\bb{B}(a,r)\in \mathcal{B}_X$ to be \textit{maximal} if it is a maximal element of the inclusion relation. That is, for any other ball $\bb{B}(a',r')\in \mathcal{B}_X$ we have $\bb{B}(a,r)\subset \bb{B}(a',r')\Rightarrow a=a',r=r'$.
\begin{definition}
    The central set of $X$ denoted by $C_X$ is the set of centres of maximal balls of the family $\mathcal{B}_X$.
\end{definition}

Even though the definition seems to construct the same object, in reality the two notions are subtly different. We only have $M_X\subset C_X\subset\overline{M_X}$ (see \cite{BirbrairDenkowski,BirbrairDenkowskiErratum}) and either of the inclusions can be strict. In practice, finding the points of $C_X$ is often easier than finding the ones of $M_X$. Luckily, due to inclusions, if we localise a point $c\in C_X$, we can be sure that $a\in M_X$ can be found arbitrarily close to $c$. For a more detailed study of the differences between sets $C_X$ and $M_X$, we refer to \cite{Fremlin}.

\subsubsection[Lambda medial axis]{$\lambda$-medial axis and scale medial axis}\label{Subsect.LambdaMedialAxisDef}
One of the hardships of applying the medial axis transformation is the sensitivity of the latter on the perturbation in the data. Even a small distortion of a set can result in dramatic changes in the medial axis. One of the approaches to tempering the process involves a change in the measurement of the distortion in the medial axis\footnote{Rather than measuring the change of $M_X$ and $M_{x'}$ with the Hausdorff distance $d_H(M_X,M_{X'})$, we measure it only with $\sup_{x\in M_X} d(x,M_{X'})$.} (see \cite{G._Matheron._Examples_of_topological_properties_of_skeletons.}). Another one treats the distortion of the set as a continuous process\footnote{ We treat $X'$ as a Kuratowski limit $\lim X_t$ of a family $\{X_t\}$ with $X_0=X$. Then the inner-continuity of the medial axes occurs $M_{X'}\subset \liminf M_{X_t}$.}
(see \cite{BiaDenk_Kuratowski}). 

A rather well studied approach calls for pruning of the medial axis. We refer here to the so-called $\lambda-$medial axis $M_\lambda$, introduced by Chazal and Lieutier \cite{ChazalLieutier}. The $\lambda-$medial axis is the result of removing from $M_X$ the points $a$ with $m(a)$ that fit within some ball $\overline{\bb{B}(p,\lambda)}$. The $\lambda-$medial axis is stable under small perturbations of the set $X$. In general, this approach to stability comes with a price. The $\lambda-$medial axis might not preserve the homotopy type of $X^c$ (so the set reconstructed with $M_\lambda (X)$ might have a different homotopy type than $X^c$). In particular, Chazal and Lieutier showed that the homotopy type is preserved when the so-called \textit{weak feature size } of $X$ is greater than $\lambda$. In real life applications, this condition is often satisfied, so the homotopy type of $X^c$ is most often preserved.


\subsubsection{Medial axis on Riemannian manifolds}
The question of the number of points that realise the distance $d(a)$ also makes sense in the setting of Riemannian manifolds. Changing the surrounding, an interesting phenomenon emerges; one  that was never met in $\bb{R}^n$. Two points on the manifold might be connected in the shortest way by two different paths. The medial axis of a closed set $X$ on a complete Riemannian manifold is adapted to detect this phenomenon. It collects points that have non-unique shortest paths (geodesics) that connect to $X$. In this way, it readily generalises the notion of a cut locus of a point. 

The medial axis on the Riemannian manifolds \cite{Albano2, Houston, Giesen, Wolter}

\section{Reconstructible sets}\label{sec3}
The reconstruction process of $X^c:=\bb{R}^n\backslash X$ using the medial axis tranformation consists of taking a union of open balls centred at points $a\in M_X$ and of radius $d(a)$ (see Fig.~\ref{Fig.RekonElipsa}). To make our study more structured, we pose the following definition.

\begin{figure}
\centering
\includegraphics[width=0.7\textwidth]{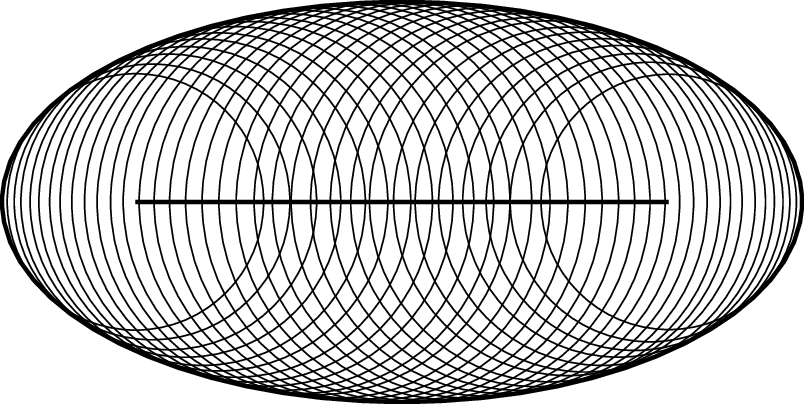}
\label{Fig.RekonElipsa}
\caption{The reconstruction process of the interior of an ellipse.}
\end{figure}

\begin{definition}\label{DefOfReconstructible}
    Let $X\subset \bb{R}^n$ be a closed set. We call a point $p\in X^c$ \textit{reconstructible} if $p\in \bigcup_{a\in M_X}\bb{B}(a,d(a))$. We call the set $X^c$ \textit{reconstructible} if every point $p\in X^c$ is reconstructible.    
\end{definition}

If the context is clear, we will omit the set $X$ used to establish the reconstructibility of a point. In case such precision is necessary, we will signal $X$ by noting the complement that contains the point $p$ and write $p\in X^c$. 
Let us quickly observe that, without any change in the notion of a reconstructible point, we can use the central set $C_X$ in \textit{lieu} of the medial axis $M_X$ in Definition~\ref{DefOfReconstructible}.

\begin{lemma}\label{Lem.PrimaryLemma}
    Let $X\subset \bb{R}^n$ be a closed set and let $p\in X^c$. Then there exists $c\in C_X$ such that $p\in \bb{B}(c,d(c))$ if and only if $p$ is reconstructable.  
\end{lemma}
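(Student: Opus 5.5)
The plan is to use the inclusions $M_X\subset C_X\subset\overline{M_X}$ quoted in the previous subsection together with the $1$-Lipschitz continuity of the distance function $d=d(\cdot,X)$. The two directions are of very different difficulty, so I treat them separately.

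\emph{Reconstructible implies the condition.} If $p$ is reconstructible, then by Definition~\ref{DefOfReconstructible} there is $a\in M_X$ with $p\in\bb{B}(a,d(a))$. Since $M_X\subset C_X$, we may simply take $c:=a$.

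\emph{The condition implies reconstructible.} Suppose $c\in C_X$ satisfies $p\in\bb{B}(c,d(c))$. Then $\delta:=d(c)-\|p-c\|>0$. Since $C_X\subset\overline{M_X}$, there is a sequence $a_\nu\in M_X$ with $a_\nu\to c$. Because $d$ is $1$-Lipschitz, $|d(a_\nu)-d(c)|\le\|a_\nu-c\|$. Hence
\[
\|p-a_\nu\|\le\|p-c\|+\|c-a_\nu\| = d(c)-\delta+\|c-a_\nu\|\le d(a_\nu)-\delta+2\|c-a_\nu\|.
\]
Choose $\nu$ with $\|c-a_\nu\|<\delta/2$. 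Then $\|p-a_\nu\|<d(a_\nu)$, so $p\in\bb{B}(a_\nu,d(a_\nu))$ with $a_\nu\in M_X$, and $p$ is reconstructible.

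No step here is a real obstacle once the inclusion $C_X\subset\overline{M_X}$ from \cite{BirbrairDenkowski,BirbrairDenkowskiErratum} is available. The only point needing care is that the ball is open: this gives the positive slack $\delta$, which absorbs the small perturbation of both the centre and the radius. Were that inclusion not assumed, proving it would be the main work. I would argue that a maximal ball $\bb{B}(c,d(c))$ cannot have a unique closest point $x$ with $c$ in the interior of a segment along which $d$ grows linearly. Approximating $c$ by points $c'$ slightly toward $x$, one then shows $m(c')$ cannot remain a singleton nearby, since otherwise the ball could be enlarged by sliding its centre away from $x$. That is the delicate part, but here it is taken as given.
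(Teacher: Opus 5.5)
Your proof is correct and follows the paper's argument: $M_X\subset C_X$ gives the forward direction, and for the converse you approximate $c$ by points of $M_X$ using $C_X\subset\overline{M_X}$ and continuity of $d$, with your $1$-Lipschitz estimate and slack $\delta/2$ simply making the paper's ``for $n$ large enough'' explicit. Your closing sketch of $C_X\subset\overline{M_X}$ is not needed, since the paper also only cites it, and as worded it is slightly off: every point of $]x,c[$ has $x$ as its unique closest point, so the approximating points cannot be taken toward $x$ on that segment; one correct route is a Brouwer fixed-point argument on a small ball around $c$ on which $m$ would be single-valued and hence continuous.
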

\begin{proof}
    If $p$ is reconstructible, then there exist $a\in M_X$ such that $p\in \bb{B}(a,d(a))$. But we have the inclusion $M_X\subset C_X$, so $a\in C_X$. 
    
    On the other hand, if there exists $c\in C_X$ such that $p\in \bb{B}(c,d(c))$ then $\|p-c\|<d(c)$. The point $c$ can be approximated by a sequence of points $x_n\in M_X$. Since the distance functions (both the point-point and the point-set one) are continuous, for $n$ large enough, we have $\|p-x_n\|<d(x_n)$. For such $n$ we have $x_n\in M_X$ and $p\in \bb{B}(x_n,d(x_n))$ so that the point $p$ is reconstructible.
\end{proof}

Now, our main question in this article is which sets are reconstructible with the help of the medial axis. The question might seem almost blasphemous. After all, the medial axis transformation is renown for reconstructing shapes \cite{Attali2009, Chang2001, GiblinReconstruction, Younes2019}. However, a short look at the example of a closed unit disc $\bb{D}:=\overline{\bb{B}(0,1)}$ (or, more generally, any convex set) shows us with great clarity that, in fact, not all points of $\bb{D}^c$ can be retrieved from $M_\bb{D}$. Indeed, the medial axis $M_\bb{D}$ is empty! So, there is no point that can act as a centre of the ball in the union $\bigcup_{a\in M_\bb{D}}\bb{B}(a,d(a))$ and (if we skip the ambiguity of the set theoretic union over an empty family) the reconstruction process fails to bring back any of the points of $\bb{D}^c$.

\begin{proposition}\label{Prop.ConvexHull}
    Let $X\subset \bb{R}^n$ be a closed set and let $p\in X^c$ be a point of the convex hull $\conv X$. Then $p$ is reconstructible. 
\end{proposition}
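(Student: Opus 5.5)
The plan is to grow balls along the ray from $p$ that points away from a nearest point of $X$, stop at the last moment the growing ball still avoids $X$, and use convexity only to show that this moment exists. We will produce a point $c\in C_X$ with $p\in\bb{B}(c,d(c))$ and then conclude by Lemma~\ref{Lem.PrimaryLemma}.

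\emph{Setting up the ray.} Since $p\in X^c$ and $X$ is closed, $d(p)>0$ and $m(p)\neq\emptyset$. Fix $x_0\in m(p)$, put $u:=(p-x_0)/\|p-x_0\|$, and for $t\ge 0$ let $a_t:=p+tu$. Then $\|a_t-x_0\|=d(p)+t$, so $d(a_t)\le d(p)+t$ always. Let
\[
t^*:=\sup\{t\ge 0\mid d(a_t)=d(p)+t\}.
\]
The set is nonempty because it contains $t=0$. By continuity of $d$ it is closed. If $d(a_s)=d(p)+s$, then the same holds for every $t\le s$: the triangle inequality gives $\bb{B}(a_t,d(p)+t)\subset\bb{B}(a_s,d(p)+s)\subset X^c$. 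Hence the set is an interval $[0,t^*]$, or all of $[0,\infty)$.

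\emph{Key step: $t^*<\infty$, where $p\in\conv X$ is used.} Suppose $d(a_t)=d(p)+t$ for all $t$. Then $X$ misses $\bigcup_{t\ge0}\bb{B}(a_t,d(p)+t)$. These balls are all tangent at $x_0$ with centres running off along $u$, so their union is the open half-space $H=\{y\mid \langle y-x_0,u\rangle>0\}$. Thus $X\subset H^c$, which is a closed convex set, so $\conv X\subset H^c$. But $\langle p-x_0,u\rangle=\|p-x_0\|>0$ gives $p\in H$, contradicting $p\in\conv X$. So $t^*$ is finite, and the supremum is attained: $d(a_{t^*})=d(p)+t^*$.

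\emph{Maximality.} Let $c:=a_{t^*}$ and $r^*:=d(p)+t^*$. I claim $\bb{B}(c,r^*)$ is a maximal element of $\mathcal{B}_X$. Suppose $\bb{B}(c,r^*)\subset\bb{B}(a',r')\in\mathcal{B}_X$. Then $r'\le d(a')\le\|a'-x_0\|$, since $x_0\in X$. On the other hand $x_0\in\overline{\bb{B}(c,r^*)}\subset\overline{\bb{B}(a',r')}$, so $\|a'-x_0\|\le r'$. Hence $x_0$ lies on both boundary spheres, and the two balls are internally tangent at $x_0$. Consequently $a'$ lies on the ray from $x_0$ through $c$, i.e. $a'=a_s$ with $s\ge t^*$, and $r'=d(p)+s=d(a_s)$. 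By the definition of $t^*$ this forces $s=t^*$, so the balls coincide. Therefore $c\in C_X$ with $d(c)=r^*$. Moreover $\|p-c\|=t^*<d(p)+t^*=d(c)$, so $p\in\bb{B}(c,d(c))$, and Lemma~\ref{Lem.PrimaryLemma} shows that $p$ is reconstructible.

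\emph{Main obstacle.} I expect the main work to be the maximality step. The tangency argument has to be written carefully: from $\bb{B}(c,r^*)\subset\bb{B}(a',r')$ one gets $\|a'-c\|\le r'-r^*$, and combining this with $\|a'-x_0\|=r'$ and $\|c-x_0\|=r^*$ forces equality in the triangle inequality, which puts $a'$ on the ray. The finiteness of $t^*$ is the only place convexity enters, and it reduces to the elementary half-space argument above.
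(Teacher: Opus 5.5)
Your proof is correct and follows the same route as the paper. You inflate the ball along the ray from a nearest point $x_0$ through $p$; if the inflation never stopped, the balls would exhaust the open half-space $\{y\mid \langle y-x_0,u\rangle>0\}$, which contains $p$ but misses $X$ and hence $\conv X$; you then pass from the resulting centre in $C_X$ to reconstructibility via Lemma~\ref{Lem.PrimaryLemma}. Your write-up is tidier than the paper's: you use convexity of the closed half-space instead of Carath\'eodory, you need no separate case for $p\in M_X$, and you actually check maximality of the final ball, which the paper only asserts.
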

\begin{proof}
    Due to the Caratheodory lemma, $p$ is an interior point of at most $n-$dimensional simplex with vertices $v_0,\ldots,v_k\in X$. Assume that $p\notin M_X$ (otherwise, $p$ is surely reconstructable). Thus, we can find a unique closest point of $p$ in $X$ that we denote by $x_p$.

    The idea now is to show that we cannot inflate the ball $\bb{B}(p,d(p))$ indefinitely while maintaining tangency with $X$ only at $x_p$. Denoting by $p_t: = t(p-x_p)+x_p$ we can rephrase that claim more formally as a condition
    \[r:=\sup\{t>0\mid \bb{B}(p_t,d(p_t))\cap X = \emptyset\} <\infty.\]
    
    Observe that $r=\infty$ means in fact that the whole half-space $\{x\in\bb{R}^n\mid\langle x-x_p,p-x_p\rangle>0\}$ is disjoint with $X$, as it is the union of the family of balls $\bb{B}(p_t,d(p_t))$ with $t>0$. We then have $\langle v_i-x_p,p-x_p\rangle\leq 0$ for $i = 0,\ldots,k$. Recall now that $p$ was the interior point of the simplex spanned by $v_0,\ldots,v_k$. Therefore, we can represent $p$ as a sum $\sum \lambda_iv_i$ with $\sum\lambda_i=1,\;\lambda_i\in (0,1)$. However, that leads to a contradiction
    \[0<\langle p-x_p,p-x_p\rangle = \langle \sum_{i=0}^k \lambda_iv_i-\sum_{i=0}^k\lambda_ix_p,p-x_p\rangle= \sum_{i=0}^k \lambda_i\langle v_i-x_p,p-x_p\rangle\leq 0.\]
    
    Finally, as $r<0$ the point $p_r$ is well defined and is a centre of a maximal ball in $X^c$. In other words $p_r\in C_X$. Due to Lemma~\ref{Lem.PrimaryLemma} the point $p$ is reconstructible. 
\end{proof}

\begin{remark}
    Proposition~\ref{Prop.ConvexHull} fully encapsulates the classical use of the medial axis, i.e. the reconstruction of $X^c$ which are bounded domains. Indeed, when $X^c$ is bounded, we can find an open ball $\bb{B}(a,r)$ such that $X^c\subset \bb{B}(a,r)$. What follows is $\bb{S}(a,r)\subset (X^c)^c=X$ and \[X^c\cap \conv X\subset X^c\cap \conv\bb{S}(a,r)=X^c.\] Therefore, the whole set $X^c$ is reconstructible due to the last proposition.
\end{remark}

Having dealt with the points of $\conv X$ we wish to go further; beyond the convex hull. To analyse the points of $X^c$ outside the convex hull of $X$ we will still lean on $\conv X$. A context suitable to formalise our investigations is provided by a notion that comes from convex geometry.

\begin{definition}
    Let $X\subset \bb{R}^n$ be a set (not necessarily closed). We call a hyperplane $L\subset\bb{R}^n$ \textit{a supporting hyperplane of $ X$} if $X \cap L\neq \emptyset$ and $X$ is entirely contained in one of the two closed half-spaces bounded by $L$. If that is the case, we denote by $L^+$ the open half-space disjoint with~$X$.  
\end{definition}

It is a standard fact from convex geometry that any point on the boundary of a closed convex set $X$ belongs to some supporting hyperplane of $X$. However, this hyperplane might not be unique. Consider a cone $C=\{(x,y)\in\bb{R}^2\mid y\geq |x|\}$. Then any plane $L_a=\{(x,y)\in\bb{R}^2\mid y=ax\}$ with $|a|\leq 1$ is a supporting plane of $C$ with $0\in L_a$. If $X$ is not closed, the supporting plane might or might not exist. To witness a convex set with no supporting planes at all, let us take the open unit ball $\bb{B}(0,1)$. An example of non closed set with supporting hyperplanes at every point of the boundary is given by $\{(x,y)\in\bb{R}^2\mid y<0\}\cup\{(0,0)\}$.

Moreover if $X$ is closed and convex, then it is the intersection of the family $\{\bb{R}^n\backslash L^+\}$ where $L$ are the supporting hyperplanes of $X$.

\begin{proposition}\label{Prop.Polprzestrzenie}
    Let $X\subset \bb{R}^n$ be a closet set and let $p\in X^c$. Let $L$ be a supporting plane of $\conv X$ such that $X \cap L$ is non-convex. 
    If $p\in L^+$ then $p$ is reconstructible.
\end{proposition}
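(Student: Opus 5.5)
The plan is to reuse the inflation argument from the proof of Proposition~\ref{Prop.ConvexHull}, starting from a large ball that sits in $L^+$ and touches $L$ at a point which is missing from $X$. By Lemma~\ref{Lem.PrimaryLemma}, it suffices to find $c\in C_X$ with $p\in\bb{B}(c,d(c))$.

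First, I use non-convexity. Since $X\cap L$ is closed and not convex, there are points $x_1,x_2\in X\cap L$ and a point $q\in\,]x_1,x_2[$ with $q\notin X$. Then $q\in L$. Let $\nu$ be the unit normal to $L$ pointing into $L^+$, and let $h>0$ be the distance from $p$ to $L$. For $R>0$ put $c_R:=q+R\nu$. The open ball $\bb{B}(c_R,R)$ is tangent to $L$ at $q$ and lies in $L^+$. A direct computation gives
\[\|p-c_R\|^2=\|p'-q\|^2+(R-h)^2,\]
where $p'$ is the orthogonal projection of $p$ onto $L$. This is $<R^2$ once $R$ is large, so I fix such an $R$ and write $c:=c_R$, giving $p\in\bb{B}(c,R)$.

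Next I compare $d(c)$ with $R$. The ball $\bb{B}(c,R)$ lies in $L^+$, which is disjoint from $\conv X\supset X$, so $d(c)\ge R$. The closed ball $\overline{\bb{B}(c,R)}$ meets the complement of $L^+$ only at $q$, and $q\notin X$. Hence no point of $X$ lies at distance $R$ from $c$, and $d(c)>R$. Consequently both $p$ and $q$ lie in $\bb{B}(c,d(c))$.

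If $c\in M_X$, we are done. Otherwise $c$ has a unique closest point $x_c\in X$, and I set $c_t:=t(c-x_c)+x_c$ for $t\ge1$. The balls $\bb{B}(c_t,\|c_t-x_c\|)$ increase with $t$ and each contains $\bb{B}(c,d(c))$, so each contains $p$. Let
\[r:=\sup\{t\ge 1\mid \bb{B}(c_t,\|c_t-x_c\|)\cap X=\emptyset\}.\]
Suppose $r=\infty$. Then $X$ misses the open half-space $H=\{x\mid\langle x-x_c,c-x_c\rangle>0\}$, which is the union of these balls. But $q\in\bb{B}(c,d(c))\subset H$, and $q$ is a convex combination of $x_1,x_2$. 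Because $H$ is an open half-space, at least one of $x_1,x_2$ must lie in $H$, contradicting $x_1,x_2\in X$. Hence $r<\infty$.

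The step I expect to be the main technical point is concluding that $c_r\in C_X$, that is, that $\bb{B}(c_r,d(c_r))$ is a maximal ball. First I would check that $d(c_r)=\|c_r-x_c\|$, using continuity and closedness of $X$. Then I would argue exactly as at the end of the proof of Proposition~\ref{Prop.ConvexHull}. If the ball were contained in a strictly larger ball of $\mathcal B_X$, that larger ball would still touch $X$ only near $x_c$. In that case one could inflate further along the ray, contradicting the choice of $r$. The alternative is that $X$ meets $\overline{\bb{B}(c_r,d(c_r))}$ at a second point, so that $c_r\in M_X$ directly.

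Once $c_r\in C_X$ is established, we have $p\in\bb{B}(c,d(c))\subset\bb{B}(c_r,d(c_r))$. Lemma~\ref{Lem.PrimaryLemma} then shows that $p$ is reconstructible.
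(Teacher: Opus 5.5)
Your proof is correct, and it takes a genuinely different and more economical route than the paper.

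\textbf{The paper's route.} The paper fixes $q\in M_{X\cap L}$. Non-emptiness of this medial axis already uses the fact that a closed set in which every point has a unique nearest point is convex. It then takes two nearest points $a,b$ and works with the whole family of points $t\eta+\lambda_t a+(1-\lambda_t)b$ above $]a,b[$. For each of these it shows that the directional reaching radius at a nearest point $x_t$ lies in $[t,\infty)$, which gives $c_t\in C_X$. Upper semicontinuity of $m$ and asymptotic estimates then give $c_t/\|c_t\|\to\eta$, and only after that does the paper conclude $p\in\bb{B}(c_t,d(c_t))$.

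\textbf{Your route.} You argue pointwise. Any gap point $q\in\,]x_1,x_2[\,\setminus X$ works, and one tangent ball at $q$ that contains $p$ suffices. The strict inequality $d(c)>R$, forced by $q\notin X$, is exactly what lets $\bb{B}(c,d(c))$ reach a point of $\conv X$. A single inflation as in Proposition~\ref{Prop.ConvexHull} then finishes.

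\textbf{What each buys.} Your argument needs no medial axis of $X\cap L$, no semicontinuity and no limits. It isolates the real principle: a point lying in some $\bb{B}(c,d(c))$ that meets $\conv X$ is reconstructible. It also proves Proposition~\ref{Prop.Beyond} verbatim. For $w\in(\overline{\conv X}\cap L)\setminus X$, the open ball $\bb{B}(c,d(c))$ is a neighbourhood of $w$, so it meets $\conv X$, and hence $H$ meets $X$. The paper's route gives, in exchange, the structural fact that $M_X$ escapes to infinity in direction $\eta$. However, its final step really uses that its balls are tangent to $X$ at bounded points $x_t$, with inner normals $v_t\to\eta$ and radii at least $t$; the direction of $c_t$ alone is not enough. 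Your argument sidesteps this.

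\textbf{The maximality step.} Drop the ``alternative'' about a second contact point. Maximality holds regardless, and a second contact point need not exist: the centres of curvature at the major-axis vertices of an ellipse lie in $C_X$ yet have a single nearest point. A clean argument goes as follows.

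The ball $\bb{B}(c_r,\|c_r-x_c\|)$ misses $X$; for $r>1$ this holds because it is the increasing union of the balls with $t<r$. Hence $d(c_r)=\|c_r-x_c\|$. Suppose $\bb{B}(c_r,d(c_r))\subset\bb{B}(a',r')\in\mathcal{B}_X$. Since $x_c\notin\bb{B}(a',r')$,
\[
\|a'-c_r\|+d(c_r)\le r'\le\|a'-x_c\|\le\|a'-c_r\|+\|c_r-x_c\|=\|a'-c_r\|+d(c_r).
\]
Equality in the triangle inequality puts $a'$ on the ray from $x_c$ through $c_r$. Therefore $\bb{B}(a',r')=\bb{B}(c_s,\|c_s-x_c\|)$ for some $s\ge r$, and the choice of $r$ forces $s=r$.
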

\begin{proof}

    Since the intersection $X_L:= X \cap L$ is non-convex, we know that $M_{X_L}$ is non-empty. Now, 
    to simplify the exposition of the proof, assume, without loss of generality, that $q\in M_{X_L}$, and $L=\{x\in\bb{R}^n\mid x_1=0\}$. Let us also denote $\eta=(1,0,\ldots,0)$; it is the normal vector to $L$ such that $L^+=\{v\in \bb{R}^n\mid \langle x,\eta\rangle >0\}$. 
    Denote by $a,b$ two distinct points of $m_{X_L}(q)$. We will show that the direction $\eta$ is in the cone at infinity of $M_X$.

    Let us investigate the points above (in the $\eta$ direction) of the segment $]a,b[$. Take any $\lambda_1,\lambda_2>0$ such that $\lambda_1+\lambda_2=1$, $t>0$, and $x_t\in m(t\eta+\lambda_1a+\lambda_2b)$. Note here that since $x_t\notin L^+$ and $\lambda_1a+\lambda_2b\in L$ we have $\langle \eta, \lambda_1a+\lambda_2b-x_t \rangle \geq 0$.  Denote by $v_t:=t\eta+\lambda_1a+\lambda_2b-x_t/\|t\eta+\lambda_1a+\lambda_2b-x_t\|$, and define the directional reaching radius\footnote{The directional reaching radius is a concept defined and studied in \cite{BirbrairDenkowski}. It plays the role of a normal curvature for non-smooth sets; together with the reaching radius, defined in the same article and studied later in \cite{Bial1, BialBysDenk}, it characterises the point of the intersection $M_X\cap X$.} 
    \[r_{v}(x):=\sup\{\rho \geq 0\mid \bb{B}(\rho v+x,d(\rho v+x))\cap X = \emptyset\}.\]
    We claim that \[t\leq r_{v_t}(x_t)<\infty.\]

    If $x_t$ is not a unique closest point to $ t\eta+\lambda_1a+\lambda_2b$ in $X$ then the claim holds; $t = \|t\eta\| \leq \|t\eta+\lambda_1a+\lambda_2b-x_t\| =r_{v_t}(x_t)<\infty$ since $\langle \eta,\lambda_1a+\lambda_2b-x_t\rangle\geq 0$.

    If $x_t$ is unique, then the first inequality of the previous paragraph still holds and we need to justify only $r_{v_t}(x_t)<\infty$. Observe again that $r_{v_t}(x_t)=\infty$ means \[X\subset \{x\in \bb{R}^n\mid \langle t\eta + \lambda_1 a + \lambda_2 b - x_t,x-x_t\rangle\leq 0\}.\]
    Hence, for $x=a$ and $x=b$ we obtain
    \[\langle t\eta + \lambda_1 a + \lambda_2 b - x_t,a-x_t\rangle\leq 0 \text{ and } \langle t\eta + \lambda_1 a + \lambda_2 b - x_t,b-x_t\rangle\leq 0\]
    Multiplying the inequalities by $\lambda_1$ and $\lambda_2$ respectively and adding them together, we obtain
    \[\langle t\eta + \lambda_1 a + \lambda_2 b - x_t,\lambda_1a+\lambda_2 b-x_t\rangle\leq 0\]
    which can be rewritten as
    \[\langle t\eta, \lambda_1a+\lambda_2 b-x_t\rangle + \|\lambda_1 a + \lambda_2 b - x_t\|^2\leq 0.\]
    Since $\lambda_1 a + \lambda_2 b \neq x_t$, we get $\langle t\eta, \lambda_1a+\lambda_2 b-x_t\rangle <0 $, which is a contradiction.

    Since the value of $r_{v_t}(x_t)$ is finite, the point $r_{v_t}(x_t)v_t +x_t$ is well defined. By the standard argument, it belongs to the central set $C_X$.

    Observe now that for any $t>0$ the point $a$ is closest to $t\eta+a$. Since the multifunction $m$ is upper-semi-continuous, when $\lambda_1\to 1$ the points $x_t$ converge to $a$. In other words, 
    \[\forall t>0\forall \varepsilon>0\exists \lambda_t>0\forall x_{t}\in m(t\eta+\lambda_ta+(1-\lambda_t)b): \|x_{t}-a\|\leq\varepsilon. \]
    
    So choose $\varepsilon>0$ and $x_t\in m(t\eta + \lambda_ta+(1-\lambda_t)b)$ and denote $\Lambda_t:=\lambda_ta+(1-\lambda_t)b-x_t$. Now, the normal vector $v_t$ takes the form
    $v_t:=(t\eta + \Lambda_t)/\|t\eta + \Lambda_t\|$. We claim now that \[\|r_{v_t}(x_t)v_t +x_t\|\to\infty \text{ and  }(r_{v_t}(x_t)v_t +x_t)/\|r_{v_t}(x_t)v_t +x_t\|\to\eta \text{ when }t\to \infty \]

    The first convergence follows
    \[\|r_{v_t}(x_t)v_t +x_t\|\geq \langle r_{v_t}(x_t)v_t +x_t,\eta\rangle \geq \langle r_{v_t}(x_t)v_t,\eta\rangle -\varepsilon. \]
    Indeed, since $\|t\eta + \Lambda_t\|\leq \|t\eta\| + \|\Lambda_t\|\leq t + \Delta$ with $\Delta$ that can be picked universally for all $t$, we have
    \[\langle r_{v_t}(x_t)v_t,\eta\rangle 
    \geq
    t\left\langle\frac{t\eta + \lambda_ta+(1-\lambda_t)b-x_t}{\|t\eta + \Lambda_t\|},\eta\right\rangle 
    = 
    t\frac{\langle t\eta,\eta\rangle -\langle x_t,\eta\rangle}{\|t\eta + \Lambda_t\|} \geq \]
    \[\geq
    \frac{t^2-t\langle x_t,\eta\rangle}{t+\Delta}
    \geq
    \frac{t^2-t\varepsilon}{t+\Delta}\to \infty\]

    We will prove the second convergence by showing that 
    \[\left\langle\frac{r_{v_t}(x_t)v_t +x_t}{\|r_{v_t}(x_t)v_t +x_t\|},\eta\right\rangle\to 1.\]
    Firstly: 
    \[ \left\langle\frac{r_{v_t}(x_t)v_t +x_t}{\|r_{v_t}(x_t)v_t +x_t\|},\eta\right\rangle = \frac{\langle r_{v_t}(x_t)v_t,\eta\rangle}{{\|r_{v_t}(x_t)v_t +x_t\|}} +\frac{\langle x_t,\eta\rangle}{\|r_{v_t}(x_t)v_t +x_t\|}\]
    The second term converges to zero, while the first one is equal
    \[ \frac{\langle r_{v_t}(x_t)v_t,\eta\rangle}{{\|r_{v_t}(x_t)v_t +x_t\|}} = \frac{r_{v_t}(x_t)(t-\langle x_t,\eta\rangle)}{\|t\eta + \Lambda_t\|\cdot\| r_{v_t}(x_t)v_t +x_t\|} = \]
    \[\frac{1}{{\|\eta + \Lambda_t /t\|\cdot\|v_t +\frac{x_t}{r_{v_t}(x_t)}\|}}-\frac{\langle x_t,\eta\rangle}{\|t\eta + \Lambda_t\|\cdot\|v_t +\frac{x_t}{r_{v_t}(x_t)}\|} \]
    Since $\|\eta + \Lambda_t/t\|\to \|\eta\|=1$ and $\|v_t +\frac{x_t}{r_{v_t}(x_t)}\|\to\|v_t\|=1$  when $t\to \infty$, we obtain \[\left\langle\frac{r_{v_t}(x_t)v_t +x_t}{\|r_{v_t}(x_t)v_t +x_t\|},\eta\right\rangle \to 1.\] 
    Thus, we have found points $c_t := r_{v_t}(x_t)v_t +x_t\in C_X$ such that $c_t/\|c_t\|\to \eta$ when $t\to \infty$ and proved that the direction $\eta$ is in the cone at infinity of $M_X$.
    
      That is enough to end the proof, since for any $p$ in $L^+$ and a sequence of points $c_n\in C_X$ with $\|c_n\|\to\infty,\; c_n/\|c_n\|\to\eta$ we will have $d(c_n)>\|p-c_n\|$ for $n$ large enough. For any such $n$ the point $p$ belongs to $\bb{B}(c_n,d(c_n))$ so it is reconstructible.
\end{proof}

\begin{proposition}\label{Prop.Beyond}
    Let $X\subset\bb{R}^n$ be a closed set and let $p\in X^c$. Let $L$ be a supporting plane of $\overline{\conv X}$ such that $ X\cap L\neq \overline{\conv X}\cap L$. If $p\in L^+$ then $p$ is reconstructible.
\end{proposition}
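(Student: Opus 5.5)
The plan is to produce a single point $y\in(\overline{\conv X}\cap L)\backslash X$ and run the inflation argument of Proposition~\ref{Prop.ConvexHull} at the points $y+t\eta$, with the roles of the simplex vertices played by the fact that $y\in\overline{\conv X}$. Such a $y$ exists because $X\cap L\subset\overline{\conv X}\cap L$ and the two sets differ by hypothesis. The argument does not need to split according to whether $X\cap L$ is convex, empty or non-convex, so Proposition~\ref{Prop.Polprzestrzenie} is not used. As in that proof, assume $L=\{x_1=0\}$, take $\eta=(1,0,\ldots,0)$ and $L^+=\{x_1>0\}$, so that $x_1\le 0$ on $\overline{\conv X}\supset X$.

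\emph{Step 1 (the balls at $y+t\eta$ are large).} Since $X$ is closed and $y\notin X$, there is $r>0$ with $\bb{B}(y,r)\cap X=\emptyset$. For $x\in X$ we have $x_1\le 0$, hence $(t-x_1)^2\ge t^2+x_1^2$, and therefore
\[\|y+t\eta-x\|^2=(t-x_1)^2+\sum_{i\ge2}(x_i-y_i)^2\ge t^2+\|x-y\|^2\ge t^2+r^2 .\]
So $d(y+t\eta)>t$. Write $p=(p_1,p')$ with $p_1>0$. Then $\|p-y-t\eta\|^2=(t-p_1)^2+\|p'-y'\|^2<t^2$ for all $t$ large enough, because the difference from $t^2$ is $-2tp_1+O(1)$. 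Fix such a $t$; then $p\in\bb{B}(y+t\eta,d(y+t\eta))$.

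\emph{Step 2 (this ball lies in a ball centred in $C_X$).} If $y+t\eta\in M_X$ we are done. Otherwise let $x_t$ be its unique closest point and $w:=y+t\eta-x_t$. We inflate along the normal, $x_t+s w$ for $s\ge1$, exactly as in Proposition~\ref{Prop.ConvexHull}, and must show the supremum $s^*$ of admissible $s$ is finite. If it were infinite, $X$ would lie in the closed half-space $\{x\mid\langle w,x-x_t\rangle\le0\}$. That half-space is closed and convex, so it contains $\overline{\conv X}$ and hence $y$, giving $\langle w,y-x_t\rangle\le0$. On the other hand
\[\langle w,y-x_t\rangle=\|w\|^2-t\,w_1,\qquad w_1=t-(x_t)_1\ge t,\]
so $\|w\|^2\ge w_1^2\ge t\,w_1$. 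Equality throughout forces $w=t\eta$, i.e.\ $x_t=y$, which is impossible since $y\notin X$. Thus $\langle w,y-x_t\rangle>0$, a contradiction, and $s^*<\infty$. By the standard argument, $c:=x_t+s^*w$ lies in $C_X$, and its ball satisfies $\bb{B}(y+t\eta,d(y+t\eta))\subset\bb{B}(c,d(c))$, since both balls are tangent at $x_t$ with collinear centres.

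\emph{Conclusion and main obstacle.} Combining the two steps gives $p\in\bb{B}(c,d(c))$ with $c\in C_X$, so $p$ is reconstructible by Lemma~\ref{Lem.PrimaryLemma}. The step needing most care is the strict inequality in Step~2. This is where the hypothesis is used: membership $y\in\overline{\conv X}$ is enough to get $\langle w,y-x_t\rangle\le0$ (closedness of the half-space passes it to the closure), and $y\notin X$ rules out the equality case. Step~1 is what replaces the cone-at-infinity computation of Proposition~\ref{Prop.Polprzestrzenie}: the positive gap $r$ from $y$ to $X$ guarantees $d(y+t\eta)>t$, and this beats $\|p-y-t\eta\|\approx t-p_1$.
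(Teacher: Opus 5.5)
Your proof is correct, and it is more direct than the paper's. The heart of your Step~2 is the same as the paper's finiteness claim $t\le r_{v_t}(x_t)<\infty$. In both, if the inflation through $x_t$ never stopped, then $X$, and hence $y$, would lie in $\{\langle t\eta+y-x_t,\,x-x_t\rangle\le 0\}$. But $\langle t\eta+y-x_t,\,y-x_t\rangle=t\langle\eta,y-x_t\rangle+\|y-x_t\|^2>0$, a contradiction.

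The routes diverge around this core. The paper does three things you do not:
\begin{itemize}
\item It first sends the case of non-convex $X\cap L$ to Proposition~\ref{Prop.Polprzestrzenie}.
\item It reaches $\overline{\conv X}$ by writing $w$ as a limit of two-point combinations $\lambda_\nu a_\nu+(1-\lambda_\nu)b_\nu$; in general this needs up to $n+1$ points.
\item It shows that the inflated centres satisfy $c_t/\|c_t\|\to\eta$ (the cone at infinity of $M_X$), and asserts that this forces $p\in\bb{B}(c_t,d(c_t))$ for large $t$.
\end{itemize}
You avoid the case split altogether. Non-convexity of $X\cap L$ already implies $X\cap L\neq\overline{\conv X}\cap L$, so your argument also subsumes Proposition~\ref{Prop.Polprzestrzenie}. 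You pass to the closure simply because a closed half-space containing $X$ contains $\overline{\conv X}$. And you replace the asymptotic-direction computation by the explicit bound $d(y+t\eta)>t>\|p-y-t\eta\|$, together with the nesting $\bb{B}(y+t\eta,d(y+t\eta))\subset\bb{B}(c,d(c))$.

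That last substitution matters. The direction of the centres alone does not put $p$ in their balls. Centres $c$ with $d(c)=c_1=s$ and lateral offset $s^{3/4}$ have direction tending to $\eta$, yet $\|p-c\|>d(c)$ for large $s$. The paper's conclusion is still true, because its $c_t$ are likewise inflations of the balls at $t\eta+w$. But it is the nesting you make explicit, not the direction, that justifies it. In return, the paper's route gives the extra geometric fact that $C_X$ has an unbounded branch asymptotic to $\eta$, which the proposition itself does not need.

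One small precision: state the admissibility condition explicitly as $\bb{B}(x_t+sw,s\|w\|)\cap X=\emptyset$, equivalently $x_t\in m(x_t+sw)$. With radius $d(x_t+sw)$, as literally written in Proposition~\ref{Prop.ConvexHull}, the condition holds for every $s$ and the supremum would be infinite.
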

\begin{proof}
    If $X \cap L$ is non convex, then the previous Proposition proves that $p$ is reconstructible. Assume without loss of generality that $L=\{x\in\bb{R}^n\mid x_1=0\}$ and that $X \cap L$ is convex. We will basically modify the proof of the previous Proposition to adopt it to new assumptions. Denote as before $\eta=(1,0,\ldots,0)$. Choose a point $w\in (\overline{\conv X} \cap L)\backslash(X \cap L)$. 
    Select $t>0$ and $x_t\in m(t\eta+w)$. 
    Note that since $x_t\notin L^+$ and $w\in L$ we have $\langle \eta, w-x_t \rangle \geq 0$.  
    Define the directional reaching radius:
    \[r_{v}(x):=\sup\{\rho \geq 0\mid \bb{B}(\rho v+x,d(\rho v+x))\cap X = \emptyset\}.\]
    We claim that \[t\leq r_{v_t}(x_t)<\infty.\]

    If $x_t$ is not a unique closest point to $ t\eta+w$ in $X$ then the claim holds $t = \|t\eta\| \leq \|t\eta+w-x_t\| =r_{v_t}(x_t)<\infty$ since $\langle \eta,w-x_t\rangle\geq 0$.

    If $x_t$ is unique, then the first inequality of the previous paragraph still holds and we need to justify only $r_{v_t}(x_t)<\infty$. Observe again that $r_{v_t}(x_t)=\infty$ means \[X\subset \{x\in \bb{R}^n\mid \langle t\eta + w - x_t,x-x_t\rangle\leq 0\}.\]

    Here the differences start.  We do not necessarily have $a,b\in L$ such that $w$ is their convex combination. However, since $w\in \overline{\conv X}$, we can find sequences of points $a_\nu,b_\nu\in X$ and scalars $\lambda_{\nu}\in(0,1)$  such that $w=\lim\lambda_\nu a_\nu+(1-\lambda_\nu)b_\nu$. We follow in the old footsteps with $x=a_\nu, b_\nu$. We obtain
    \[\langle t\eta + w - x_t,a_\nu-x_t\rangle\leq 0,\text{ and } \langle t\eta + w - x_t,b_\nu-x_t\rangle\leq 0\]
    Multiplying the inequalities by $\lambda_\nu$ and $1-\lambda_\nu$ respectively and adding them together, we obtain
    \[\langle t\eta + w - x_t,\lambda_\nu a + (1-\lambda_\nu) b-x_t\rangle\leq 0\]
    which can be rewritten as
    \[\langle t\eta, \lambda_\nu a + (1-\lambda_\nu) b-x_t\rangle + \langle w-x_t,\lambda_\nu a + (1-\lambda_\nu) b - x_t\rangle\leq 0.\]
    Since $\lambda_\nu a + (1-\lambda_\nu) b\to w$ when $\nu\to \infty$, and $w \neq x_t$ the second term tends to $\|w-x_t\|^2>0$. Consequently, we get as a limit $\langle t\eta, w-x_t\rangle <0 $, which is a contradiction.

    The proof of the previous proposition now approximates the points above some $\xi \in X \cap L$. However, we might not find any such $\xi$ under current assumptions. We need a new procedure. 

    Frankly,  we can simply go upward, from any point of $p\in X$ located in $(\overline{\conv X}\backslash X)\cap L$. Denote $\delta:=\min\{d(x,L)\mid x\in m(p)\}$, and call the point above $p$ to be $w_t = t\eta + p$. For the closest point $x_t\in m(w_t)$ the coordinates of $x_t=(x^1_t,\ldots,x^n_t)$ must satisfy 
    \[|x^1_t|<\sqrt{t^2+2\delta t+d(p)^2}-t \text{ and } \|(x_t^2,\ldots,x_t^n)\|\leq \sqrt{2t\delta+d(p)^2}.\] 
    We then also find that the vector $n_t$ given by 
    \[n_t:=w_t-x_t=((t+\delta)-x_t^1,p^2-x_t^2,\ldots,p^n-x_t^n)\]
    is normal to $X$ at $x_t$, and points at $w_t$. For $v_t=n_t/\|n_t\|$ the normalised expression $(r_{v_t}(x_t)v_t-x_t)/\|r_{v_t}(x_t)v_t-x_t\|$ will get steeper and steeper.

    Indeed, $r_{v_t}(x_t)\geq t\to\infty$ as we have seen a moment ago. Now, the fraction 
    \[\frac{r_{v_t}(x_t)v_t+x_t}{\|r_{v_t}(x_t)v_t+x_t\|}=\frac{r_{v_t}(x_t)v_t}{\|r_{v_t}(x_t)v_t+x_t\|}+\frac{x_t}{\|r_{v_t}(x_t)v_t+x_t\|}\]
     with the second term of the expression converging to zero when $t\to \infty$. The directional component of the first term is just scaled $n_t$, and we can clearly see that $n_t/t\to \eta$. So, the whole fraction $\frac{r_{v_t}(x_t)v_t+x_t}{\|r_{v_t}(x_t)v_t+x_t\|}$, being a vector on the unit sphere, must converge to $\eta$. Again, $\eta$ is a direction in the cone of $M_X$ at infinity. 

     By the same argument as at the end of the previous, the point $p$ belongs to some $\bb{B}(c,d(c))$ with $c\in C_X$, hence it is reconstructible.
\end{proof}

\begin{corollary}
    Let $X\subset \bb{R}^n$ be a closed set and let $p\in X^c$ be a point of the closure of the convex hull $\conv X$. Then $p$ is reconstructible.    
\end{corollary}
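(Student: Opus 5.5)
Let $p\in X^c\cap\overline{\conv X}$. If $p\in\conv X$, Proposition~\ref{Prop.ConvexHull} already gives reconstructibility, so we assume $p\in\overline{\conv X}\setminus\conv X$. Such a $p$ lies on the boundary of the closed convex set $\overline{\conv X}$: interior points of $\overline{\conv X}$ are interior points of $\conv X$, since a convex set and its closure have the same interior. We would therefore rather avoid Proposition~\ref{Prop.Beyond}, whose conclusion concerns $L^+$, which does not contain $p$. Instead we adapt the argument of Proposition~\ref{Prop.ConvexHull}, which uses no simplex containing $p$ beyond the inequality $\langle v-x_p,p-x_p\rangle\le 0$ for all $v$ in a convex set containing $p$.

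First, if $p\in M_X$ we are done. Otherwise let $x_p$ be the unique closest point of $p$ in $X$, and set $p_t:=t(p-x_p)+x_p$. We would then show that
\[r:=\sup\{t>0\mid \bb{B}(p_t,d(p_t))\cap X=\emptyset\}<\infty .\]
Suppose $r=\infty$. Then the open half-space $H=\{x\mid\langle x-x_p,p-x_p\rangle>0\}$ is disjoint from $X$, as in the proof of Proposition~\ref{Prop.ConvexHull}. Hence $X$ lies in the closed half-space $\{x\mid\langle x-x_p,p-x_p\rangle\le 0\}$. This half-space is closed and convex, so it also contains $\overline{\conv X}$, and in particular $p$. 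This gives $\|p-x_p\|^2=\langle p-x_p,p-x_p\rangle\le 0$, contradicting $p\notin X$. So $r<\infty$, and by the standard argument $p_r\in C_X$. Since $p=p_1$ with $1\le r$ and the balls $\bb{B}(p_t,d(p_t))$ increase with $t$, we get $p\in\overline{\bb{B}(p_r,d(p_r))}$.

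The main obstacle is ensuring that $p$ lies in the \emph{open} ball, that is, that we can take $r>1$. If $r=1$, then $p$ itself would be a centre of a maximal ball, hence $p\in C_X$. In that case we use $C_X\subset\overline{M_X}$ together with the monotonicity above: for $t<r$ slightly, or directly for $c=p\in C_X$, we have $p\in\bb{B}(p,d(p))$ because $d(p)>0$. So in either case there is $c\in C_X$ with $p\in\bb{B}(c,d(c))$, and Lemma~\ref{Lem.PrimaryLemma} finishes the proof. If $r>1$, then $\|p-p_r\|=(r-1)d(p)<r\,d(p)=d(p_r)$, and again Lemma~\ref{Lem.PrimaryLemma} applies.
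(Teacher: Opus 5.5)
Your proof is correct, but it takes a different and more direct route than the paper's.

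\textbf{The paper's route.} For $p\in\overline{\conv X}\setminus\conv X$, the paper chooses a supporting hyperplane $L$ of $\overline{\conv X}$ through $p$ and reruns the upward construction from the proof of Proposition~\ref{Prop.Beyond}. It passes to $w_t=p+t\eta$, picks $x_t\in m(w_t)$, and shows that the directional reaching radius $r_{v_t}(x_t)$ is finite. It then covers $p$ by balls $\bb{B}(c_t,d(c_t))\supset\bb{B}(w_t,d(w_t))$ whose centres $c_t\in C_X$ escape to infinity along $\eta$.

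\textbf{Your route.} You stay at $p$ and rerun the argument of Proposition~\ref{Prop.ConvexHull} along the ray from $x_p$ through $p$. In place of Carath\'eodory you use the observation that a closed half-space containing $X$ contains $\overline{\conv X}$.

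\textbf{What each buys.} Your version needs no supporting hyperplane, so your remark that $p\in\partial\overline{\conv X}$ is never actually used. It also needs no limit $t\to\infty$. Nor does it rely on the finiteness step of Proposition~\ref{Prop.Beyond}. That step approximates a point of $\overline{\conv X}$ by convex combinations of only two points of $X$, which is not possible in general; your half-space observation is the clean replacement. Since you never use $p\notin\conv X$, your argument proves Proposition~\ref{Prop.ConvexHull} and the corollary at once. The paper's route gives more information. The covering centres escape in the normal direction of $L$, so one family of balls covers $p$ together with all of $L^+$, in line with Theorem~\ref{Theorem}.

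\textbf{Two minor points.} First, the condition defining $r$ should read $\bb{B}(p_t,t\,d(p))\cap X=\emptyset$. With radius $d(p_t)$ it holds for every $t$; this slip is inherited from the paper. Second, the obstacle you single out does not exist. By your own monotonicity, $p\in\bb{B}(p,d(p))\subset\bb{B}(p_r,d(p_r))$, i.e.\ $\|p-p_r\|=(r-1)d(p)<r\,d(p)=d(p_r)$ for every $r\ge 1$. The separate case $r=1$ and the appeal to $C_X\subset\overline{M_X}$ can therefore be dropped.
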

\begin{proof}
    Since $p\in\overline{\conv X}$ it is either a point of $\conv X$ and Proposition~\ref{Prop.ConvexHull} says it is reconstructible, or it is a point of $ \overline{\conv X}\backslash X$. In the latter case, any supporting hyper-plane $L$ of $\overline{\conv X}$ with $p\in L$ will raise a sequence of balls as in the proof of Proposition~\ref{Prop.Beyond} with $p\in \bb{B}(c_t,d(c_t)).$
\end{proof}

\begin{theorem}\label{Theorem}
    Let $X\subset\bb{R}^n$ be a closed set. Let $\mathcal{L}$ be a family of supporting hyperplanes of $\overline{\conv X}$ such that $X \cap L \neq \overline{\conv X} \cap L $. Then $X^c$ is  reconstructible if and only if it is a subset of a union $\overline{\conv X} \cup \bigcup_{L\in\mathcal L}L^+ $
\end{theorem}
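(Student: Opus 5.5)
The "if" direction is assembled from results already proved. Suppose $X^c\subset \overline{\conv X}\cup\bigcup_{L\in\mathcal L}L^+$ and take $p\in X^c$. If $p\in\overline{\conv X}$, the Corollary shows $p$ is reconstructible. Otherwise $p\in L^+$ for some supporting hyperplane $L$ of $\overline{\conv X}$ with $X\cap L\neq\overline{\conv X}\cap L$, and Proposition~\ref{Prop.Beyond} gives reconstructibility. Since $p$ was arbitrary, $X^c$ is reconstructible.

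The content of the theorem is the "only if" direction, which I would prove by contraposition. Take $p\in X^c$ lying neither in $\overline{\conv X}$ nor in any $L^+$ with $L\in\mathcal L$, and suppose for contradiction that $p\in\bb{B}(a,d(a))$ for some $a\in M_X$. Let $x_1\neq x_2$ be two points of $m(a)$. Consider the metric projection $q$ of $p$ onto the closed convex set $K:=\overline{\conv X}$. Since $p\notin K$, $q\neq p$, and the hyperplane $L$ through $q$ orthogonal to $p-q$ supports $K$ with $p\in L^+$. By assumption $L\notin\mathcal L$, so $X\cap L=K\cap L$; in particular $q\in X$. I would then compare the open ball $\bb{B}(a,d(a))$, which misses $X$ and hence misses $q$, with the geometry of $L^+$. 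The aim is to show that this ball cannot reach $p$.

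The key step is the following. Since $\bb{B}(a,d(a))\cap X=\emptyset$ and $x_1,x_2\in\bb{S}(a,d(a))\cap X\subset K$, I would split into cases according to the position of $a$.
\begin{itemize}
\item If $a\in L^+\cup L$, then the ball is centred on the $p$-side. One exploits that $K\cap L=X\cap L$ is convex and that $X$ lies in the closed complementary half-space, so the ball must touch $X$ only in $L$ or below it.
\item If $a$ lies strictly on the side of $K$, then $\bb{B}(a,d(a))\cap L^+$ is a cap whose base disc $D:=\bb{B}(a,d(a))\cap L$ contains no point of $X\cap L=K\cap L$. The idea is to find a different supporting hyperplane $L'$ of $K$ separating $p$ from the portion of $K$ relevant to the ball. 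For instance, one could rotate $L$ about the relative boundary of $K\cap L$ toward the cap. One must then argue either that $L'\in\mathcal L$, which contradicts the choice of $p$, or that $L'$ meets $K$ only in $X$, which contradicts $x_1\neq x_2$ by convexity of $X\cap L'$. Here one uses that the segment $[x_1,x_2]$ passes through the ball and therefore leaves $X$.
\end{itemize}

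The main obstacle is this last case: making rigorous the claim that every point of $L^+\setminus K$ covered by a medial ball forces some supporting hyperplane in $\mathcal L$ through that region. My first attempt would be to take, among all supporting hyperplanes of $K$ having $p$ on the positive side, one that is extremal in the direction of $a$. I would show by a compactness/limit argument on unit normals that it touches $K$ at a point of $\conv\{x_1,x_2\}$-type combinations lying in $\bb{B}(a,d(a))$, hence not in $X$, which places it in $\mathcal L$. If this fails in full generality, the theorem's "only if" may need extra care, for example when $K$ is not line-free.
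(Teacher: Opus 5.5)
Your ``if'' direction is correct and is the same as the paper's (the Corollary plus Proposition~\ref{Prop.Beyond}). The ``only if'' direction, however, is not proved. The one step that matters is exhibiting some $L'\in\mathcal L$ whose half-space ${L'}^{+}$ contains $p$, and you leave it as an intention. The set-up around it does not lead there either.

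\emph{The projection hyperplane gives no contradiction.} Your deduction $X\cap L=K\cap L$ is valid, but this $L$ is the wrong object. Take $X=\{(-1,0),(1,0)\}\subset\bb{R}^2$ and $p=(2,1)$. Then $p$ is reconstructible: it lies in $\bb{B}((0,t),d((0,t)))$ for $t>2$, and $(0,t)\in M_X$. Yet the projection line $\{x+y=1\}$ meets $\overline{\conv X}$ only in $(1,0)\in X$. The hyperplane of $\mathcal L$ that actually works is $\{y=0\}$. So the case split on the position of $a$ relative to $L$, and the ``rotation'' of $L$, have no content yet.

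\emph{The extremal-hyperplane fallback fails in general.} A supporting hyperplane of $K$ through a point of the open chord $]x_1,x_2[$ must contain the whole chord. That is impossible when the chord meets the interior of $K$. For example, take $X=\{(\pm1,0),(0,\pm1)\}$, $a=0$, $x_{1,2}=(\pm1,0)$ and $p=(0.6,0.6)$: no supporting line of $K$ meets $]x_1,x_2[$.

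\emph{The missing idea.} Use the second closest point to put a point of $K\setminus X$ inside the ball, and then walk from $p$ towards it. Write $B=\bb{B}(a,d(a))$ and $K=\overline{\conv X}$.
\begin{enumerate}
\item Since $x_1\neq x_2$, the open chord $]x_1,x_2[$ lies in $B\cap K$.
\item Because $B$ is open and $K=\overline{\operatorname{ri}K}$, there is a point $m'\in B\cap\operatorname{ri}K$.
\item By convexity $[p,m']\subset B$. Hence $[p,m']\cap K=[a',m']$ with $a'\in B$, so $a'\notin X$.
\item If $p\notin\operatorname{aff}K$, any hyperplane $L\supset\operatorname{aff}K$ with $p\notin L$ works: it satisfies $K\cap L=K\ni m'\notin X$, and $p$ lies in one of its open sides.
\item Otherwise $a'$ is a relative boundary point of $K$. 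So some supporting hyperplane $L$ of $K$ at $a'$ misses the relative interior point $m'$. Since $p=a'+s(a'-m')$ with $s>0$, we get $p\in L^+$.
\item Finally $a'\in(K\cap L)\setminus X$, so $L\in\mathcal L$, which is the desired contradiction.
\end{enumerate}

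This is the entry-point idea of the paper's proof. The paper, however, aims the segment at a point $q\in m(a)$, and as the two-point example shows, $[p,q]$ may meet $K$ only at $q$. Aiming at $B\cap\operatorname{ri}K$ is what genuinely uses $\#m(a)\ge 2$ and makes the argument go through. No line-freeness of $K$ is needed.
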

\begin{proof}
    By previous Propositions, if $X^c$ is a subset of $\overline{\conv X}\cup \bigcup_{L\in\mathcal L}L^+ $ then all points in $X^c$ are reconstructible. It is so since $\overline{\conv X} \cap L$ is convex. So whenever $X \cap L$ is not convex, it cannot be equal to $\overline{\conv X} \cap L$. Let us briefly explain why no other point can be reconstructible.

    By definition, every reconstructible point $p\notin \conv X$ must be a point of some ball $\bb{B}(a,d(a))$ with $a\in M_X$. Pick a point $q\in m(a)$. Now, since the ball $\bb{B}(a,d(a))$ is convex, we can connect the points $p$ and $q$ with a segment $]p,q[\subset \bb{B}(a,d(a))$. The intersection of this segment with the closure of $\conv X$ forms a subinterval $[a,b[\,\subset\,]p,q[$. Now, we clearly see that for any supporting plane $L$ of $\overline{\conv X}$ with $a\in L$, the point $a$ is in $(L\cap \overline{\conv{X}})\backslash (X \cap L)$. When we pick $L$ to be a hyperplane that separates $p$ and $\overline{\conv X}$ we have the Theorem. 

\end{proof}

\begin{example}\label{Ex.Logarytm1}
We have all the possible preparation to investigate the example of 
\[X=\{(x,y)\in\bb{R}^2\mid y=\ln x\}\cup \{(-1,0)\} \text{ (see Fig.~\ref{Fig.Logarytm}).}\]

First, compute the convex hull of $X$. It is a subgraph of $y=\ln x$ together with the subgraph of the affine function tangent to $y=\ln x$ and passing through $(-1,0)$. The latter touches the graph at point $(\xi,\ln \xi)$ with $\xi = e^{W(1)+1}$ where $W$ denotes the Lambert $W$ function. Therefore, the convex hull is described by
\[\conv X = \{y\leq \ln x\}\cup \{y\leq \xi^{-1}(x + 1),x\in (-1,\xi]\}\cup\{(-1,0)\}.\] The supporting lines of $\overline{\conv{X}}$ are $x=-1$, $y=\xi^{-1}(x + 1)$, and every line tangent to the graph $y=\ln x$ at points $(x_0,y_0)$ with $x_0>\xi$. Only the first two have nonempty intersection with $\overline{\conv X}\backslash X$. So, the subset of $X^c$ that is reconstructible by $M_X$ is \[\left(\{y< \ln x\} \cup \{x<\xi\}\cup \{y>\xi^{-1}(x+1)\}\right)\backslash X.\] 

The reconstruction process does not provide information on the shape of $X$ in the region  
\[\Omega = \{ \ln x<y<e^{-W(1)-1}(x + 1),\;x>e^{W(1)+1}\}.\] 
As long as $X\cap \Omega = (\conv X)\cap \Omega$ any deformation of $X$ will not affect $M_X$ or the distance function $d$ restricted to $M_X$.

Note also that neither of the points of $M_X$ that lies above the graph of $y=\ln x$ (which, by the way, forms a smooth curve) has its distance realised to the right of $x=\xi$.
\end{example}


\begin{figure}
\centering
\includegraphics[width=0.7\textwidth]{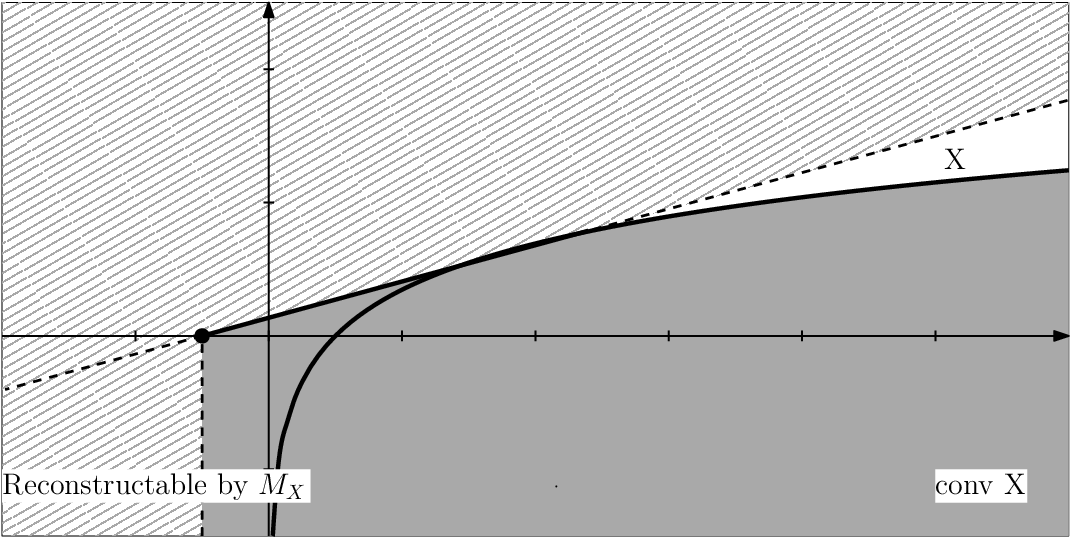}\label{Fig.Logarytm}
\caption{Example~\ref{Ex.Logarytm1} and \ref{Ex.Logarytm2}. The shaded region is the convex hull of $X$. During the reconstruction process with the medial axis, we get back points in $\overline{\conv X}$ together with ones in the striped region. The plain white region is not reached by balls centred at points of the medial axis. If we try to recreate the graph of the logarithm function by mapping the points of the medial axis through the multifunction $m$, we need to be careful. While traversing the connected component of $M_X$ that lies on the upper left we recreate only a part of the graph. The other connected component gives us the full graph of the logarithm.  }
\end{figure}

\begin{example}\label{Ex.Logarytm2}(The reconstruction of a smooth curve.) 
Let $X$ be the set from the Example~\ref{Ex.Logarytm1}. The medial axis of $X$ consists of two connected components $\gamma_1,\gamma_2$, which are $C^1-$smooth curves. Denote by $\gamma_i(t)$ their length parameterisations and let $T_i(t),N_i(t)$ denote, respectively, a unit tangent and a unit normal vector to $M_X$ at $\gamma_i(t)$. Then, considering a formula
\[x_i^\pm(t):=\gamma_i(t)-d((\gamma_i(t))d'((\gamma_i(t))T_i'(t)\pm d(\gamma_i(t))\sqrt{1-d'(\gamma(t))^2}N_i(t)\] we get a parametrisation of a part of $X$. (This construction is taken from Giblin \cite{GiblinReconstruction}.) From the analysis in Example~\ref{Ex.Logarytm1}, we can see that only one set of $x_i^\pm$ parametrises the whole curve $y=\ln x$. 
\end{example}

\section{Stability of the reconstructible points}\label{Sec.Stability}

The question of stability of the medial axis is prevalent in the literature. How does it apply to the notion of reconstructibility? Namely, How does the reconstructible part of $X^c$ behave when we change $X$ slightly?
In general, the change can be drastic. A straight line $L=\{y=0\}$ on the Euclidean plane has an empty medial axis, so the reconstructible part of $L^c$ is empty. If we puncture the line with an arbitrarily small gap $I_\varepsilon:=](-\varepsilon,0), (\varepsilon,0)[$, then the entire $(L\backslash I_\varepsilon)^c$ becomes reconstructible. 

Luckily, severe changes can appear only outside of $\conv X$. 
Indeed, Proposition~\ref{Prop.ConvexHull} asserts that $X^c\cap\conv X$ and $Y^c\cap\conv Y$ are reconstructable. If $X$ and $Y$ are close to each other (in the Hausdorff or Kuratowski sense), then their convex hulls are close as well. We therefore have a kind of  stability of a part of the reconstructable points. The only difference in reconstructability in the region of convex hulls of $X$ and $Y$ comes directly from the symmetric difference of $X$ and $Y$. However, remember that the complements of $X^c$ and $Y^c$ can still vary a lot even for close $X$ and $Y$.\footnote{ Take, for example, $X=\bb{R}\backslash\left((-1,1)\cup(k,k+1)\right)$ and $Y=\bb{R}\backslash ((-1,1))$. Even though $X$ and $Y$ each are contained in an $1-$offset of the other, their complements can be arbitrarily far from each other.} 

Next, we can use the Kuratowski convergence theorem to establish another result. Recall that for a family of sets $\{X_t\}_{t\in T }$ such that $X_t\to X_0$ in the Kuratowski sense when $t\to 0$ the medial axes satisfy an inclusion $M_{X_0}\subset \liminf_{t\to 0} M_{X_t}$ (see~\cite{BiaDenk_Kuratowski}). We have the following.

\begin{proposition}\label{Prop.Kuratowski}
    Let $\{X_t\}_{t\in T }$ be a family of closed subsets of $\bb{R}^n$ such that $\lim_{t\to 0} X_t = X$ in the Kuratowski sense and let $p\in X^c$ be reconstrucible. Then there exists a neighbourhood $T_0$ of $0\in T$ such that for any $t\in T_0$ the point $p\in X_t^c$ is reconstructible.
\end{proposition}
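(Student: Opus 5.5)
The plan is to move a single witnessing ball from $X$ to $X_t$. Since $p\in X^c$ is reconstructible, Definition~\ref{DefOfReconstructible} gives a point $a\in M_X$ with $\|p-a\|<d_X(a)$. Set $\varepsilon:=d_X(a)-\|p-a\|>0$. I want points $a_t\in M_{X_t}$ close to $a$ whose radii $d_{X_t}(a_t)$ are close to $d_X(a)$. The strict inequality $\|p-a_t\|<d_{X_t}(a_t)$ will then persist, and this simultaneously shows $p\in X_t^c$, since $d_{X_t}(p)\geq d_{X_t}(a_t)-\|p-a_t\|>0$.

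I will argue by contradiction, because the inner-continuity result from \cite{BiaDenk_Kuratowski} is sequential. Suppose no neighbourhood $T_0$ works. Then there is a sequence $t_\nu\to 0$ such that, for every $\nu$, either $p\in X_{t_\nu}$ or $p$ is not reconstructible for $X_{t_\nu}$.

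By the inclusion $M_X\subset\liminf_{t\to 0}M_{X_t}$ recalled before the statement, applied along $t_\nu$, there are points $a_\nu\in M_{X_{t_\nu}}$ with $a_\nu\to a$. It remains to show $d_{X_{t_\nu}}(a_\nu)\to d_X(a)$. Since distance functions are $1$-Lipschitz,
\[
|d_{X_{t_\nu}}(a_\nu)-d_X(a)|\leq \|a_\nu-a\|+|d_{X_{t_\nu}}(a)-d_X(a)|,
\]
so it suffices to have pointwise convergence $d_{X_{t_\nu}}(a)\to d_X(a)$. Granting this, for $\nu$ large we get
\[
\|p-a_\nu\|\leq\|p-a\|+\|a-a_\nu\|<d_X(a)-\varepsilon/2<d_{X_{t_\nu}}(a_\nu).
\]
Hence $p\in\bb{B}(a_\nu,d_{X_{t_\nu}}(a_\nu))$ with $a_\nu\in M_{X_{t_\nu}}$, and, as noted above, $p\in X_{t_\nu}^c$. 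So $p$ is reconstructible for $X_{t_\nu}$, which contradicts the choice of the sequence.

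The main step needing care is the convergence of distance functions. For closed sets in $\bb{R}^n$, Kuratowski convergence to a nonempty limit is equivalent to pointwise (indeed locally uniform) convergence of the distance functions; see \cite{RockafellarWets}. I would also verify it directly, in two halves.

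\emph{Upper bound.} Take $x\in m_X(a)$. Since $x\in\liminf X_{t_\nu}$, there are $x_\nu\in X_{t_\nu}$ with $x_\nu\to x$. Then $\limsup d_{X_{t_\nu}}(a)\leq\lim\|a-x_\nu\|=d_X(a)$.

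\emph{Lower bound.} Take $y_\nu\in m_{X_{t_\nu}}(a)$. The upper bound shows these points are bounded, so every accumulation point lies in $\limsup X_{t_\nu}=X$. Therefore $\liminf d_{X_{t_\nu}}(a)\geq d_X(a)$.

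Here nonemptiness of $X$, which holds throughout the paper, guarantees the $X_{t_\nu}$ are eventually nonempty and the distances finite.
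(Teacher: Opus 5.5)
Your proof is correct and follows the same route as the paper: take a witness $a\in M_X$ with $\|p-a\|<d(a)$, use the inclusion $M_X\subset\liminf_{t\to 0}M_{X_t}$ to get nearby points of $M_{X_t}$, and use convergence of the distance functions to keep $p$ strictly inside the transported ball. Your additions fill in steps the paper leaves implicit: the sequential contradiction framing, the direct proof that $d_{X_{t_\nu}}\to d_X$ pointwise (the paper just asserts that $|d_t(q_t)-d(q)|$ becomes small after shrinking $T_0$), and the check that $p\in X_t^c$.
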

\begin{proof}
    By the definition of a reconstructible point, we can find $q\in M_X$ such that $p\in \bb{B}(q,d(q))$. Since $M_{X}\subset \liminf_{t\to 0} M_{X_t}$, we know that setting a neighbourhood $U$ of $q$ we can find $T_0$ such that for $t\in T_0$ every $M_{X_t}$ has a point in $q_t\in U$. Set $U$ to be a ball $B$ centred on $q$ and of radius $(d(q)-\|p-q\|)/2$. By shrinking $T_0$ we can assume further $|d_t(q_t)-d(q)| < (d(q)-\|p-q\|)/2$ as well. Then 
    \[
    \|p-q_t\|\leq \|q-q_t\|+\|p-q\|\leq d(q) - (d(q)-\|p-q\|)/2\leq d_t(q_t).
    \]
    So the point $p$ is reconstructible for all $X_t$ with $t\in T_o$ 
\end{proof}

\section{Other skeletal structures and settings}
\subsection[Lambda medial axis]{$\lambda-$medial axis}

What subset of $X^c$ can we reconstruct if we restrict the centres of balls in the union $\bigcup_{p\in M_X} \bb{B}(p,d(p))$ to the points $p\in M_\lambda(X)$ (see sec.~\ref{Subsect.LambdaMedialAxisDef})? Surely, the reconstructed part will be a subset of the reconstructible part of $X^c$ in the sense of definition~\ref{DefOfReconstructible}. 

Intuitively, new obstructions to reconstructibility will arise only in the bottlenecks of $X$. Bottlenecks can take the form of tunnels, sharp corners, or gates at the boundary of $\conv X$. Unfortunately, the proofs of Propositions \ref{Prop.ConvexHull} -- \ref{Prop.Beyond} do not discuss the diameter of $m(a)$. Actually, it is not immediately obvious how to bound the diameter of a ball encasing $m(a)$ in terms of $d(a)$ nor how this diameter behaves for $x$ close to $a$ (we only know that it is upper semi-continuous). It looks like the proof needs to be a bit more elaborate.

\begin{figure}
\centering
\includegraphics[width=0.7\textwidth]{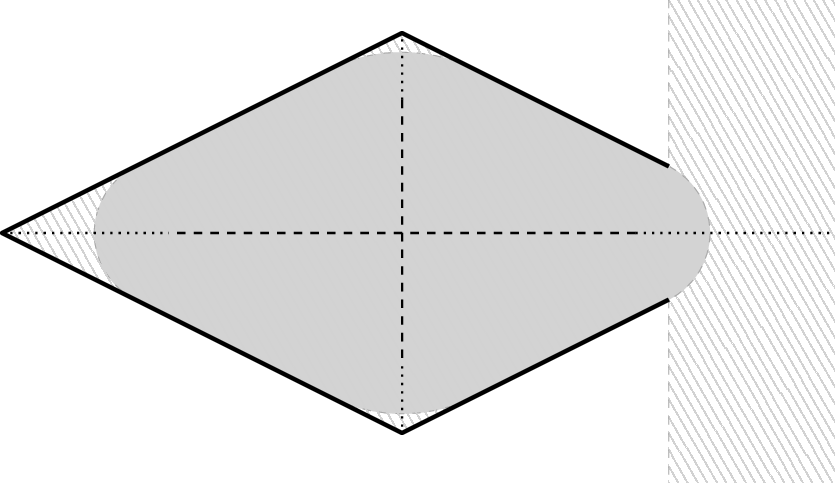}\label{Fig.LambdaSzkielet}
\caption{In general, the set of $\lambda-$reconstructible points forms a proper subset of reconstructible points. In example above, the dashed and dotted lines represent respectively the medial axis and the $\lambda-$medial axis of the shape. Only the grey area is $\lambda-$reconstructible, when $\lambda$ is set to be slightly larger than the gap on the right side of the shape. It forms a proper subset of the set of reconstructible points, depicted as the striped area.}
\end{figure}

In analogy to Definition~\ref{DefOfReconstructible}, we propose the following notion of $\lambda-$reconstructible points.
\begin{definition}\label{DefOfKReconstructible}
    Let $X\subset \bb{R}^n$ be a closed set and let $\lambda \geq 0$. A point $p\in X^c$ is said to be $\lambda-$ \textit{reconstructible} if $p\in \bigcup_{a\in M_\lambda (X)}\bb{B}(a,d(a))$.   
\end{definition}

Let $dist(A,B) := \inf \{\|a-b\|\mid a\in A, b\in B$. So far we have only the following weak version of proposition~\ref{Prop.Beyond}.

\begin{proposition}
    Let $X\subset\bb{R}^n$ be a closed set and let $L$ be a supporting plane of $ \conv X$ and denote $ X_L:= X\cap L$. Choose $\lambda>0$ and assume that there exists a point $q\in L$ such that $m_{X_L}(q)$ has at least two connected components and that for any two distinct connected components $A,B$ we have $dist(A,B)>2\lambda$. Then every point $p\in L^+$  is $\lambda-$reconstructible.
\end{proposition}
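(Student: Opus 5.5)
We follow the proof of Proposition~\ref{Prop.Polprzestrzenie}, with one extra ingredient: at each medial-axis point used there, the set of closest points must not fit in any ball $\overline{\bb{B}(c,\lambda)}$. Normalise as before: $L=\{x_1=0\}$, $\eta=(1,0,\ldots,0)$, $L^+=\{x_1>0\}$. Since $m_{X_L}(q)$ has at least two connected components, choose two of them, $A$ and $B$, and points $a\in A$, $b\in B$. Then $\|a-b\|\geq dist(A,B)>2\lambda$.

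\textbf{Step 1: points of $M_X$ whose closest points lie near both $a$ and $b$.} The segment argument of Proposition~\ref{Prop.Polprzestrzenie} only produces points of $C_X$ whose closest point is near $a$. Instead I would look at points $q+t\eta$ with $t>0$, i.e. directly above $q$. Since $X\subset\{x_1\leq 0\}$, every $y\in X$ satisfies
\[\|q+t\eta-y\|^2=\|q-y\|^2-2t\langle\eta,y\rangle+t^2\geq \|q-y\|^2+t^2.\]
Equality holds exactly when $y\in L$. Hence $d(q+t\eta)=\sqrt{d_{X_L}(q)^2+t^2}$ and $m(q+t\eta)=m_{X_L}(q)\supset\{a,b\}$. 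So the whole vertical ray $\{q+t\eta\mid t>0\}$ lies in $M_X$, and each of its points has a closest-point set containing $a$ and $b$.

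\textbf{Step 2: the $\lambda$ condition.} Suppose $m(q+t\eta)$ were contained in some ball $\overline{\bb{B}(c,\lambda)}$. Then $\|a-b\|\leq 2\lambda$, contradicting $\|a-b\|>2\lambda$. So every point $q+t\eta$, $t>0$, belongs to $M_\lambda(X)$. This uses the definition of $M_\lambda$ recalled in Section~\ref{Subsect.LambdaMedialAxisDef}, which removes a point only if its whole closest-point set fits in a ball of radius $\lambda$. Note that only two points of $m_{X_L}(q)$ at distance greater than $2\lambda$ are needed; the component hypothesis is just one way to guarantee this. If the intended definition of $M_\lambda$ uses the radius of the smallest enclosing ball, the same two points still give radius at least $\|a-b\|/2>\lambda$.

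\textbf{Step 3: covering $L^+$.} Fix $p\in L^+$, so $p_1>0$, and set $c_t=q+t\eta$. Then
\[d(c_t)^2-\|p-c_t\|^2=d_{X_L}(q)^2-\|p-q\|^2+2t\langle p-q,\eta\rangle=d_{X_L}(q)^2-\|p-q\|^2+2tp_1.\]
This tends to $+\infty$ as $t\to\infty$. So for large $t$ we get $p\in\bb{B}(c_t,d(c_t))$ with $c_t\in M_\lambda(X)$, and $p$ is $\lambda$-reconstructible.

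\textbf{Main obstacle.} The step needing the most care is Step 1, namely checking that the closest points of $q+t\eta$ in all of $X$ are exactly those in $X_L$. Points of $X$ strictly below $L$ are farther away only because of the term $-2t\langle\eta,y\rangle>0$, so the inequality must be stated precisely. It holds because $L$ supports $\conv X$, so $X$ lies entirely on one side of $L$. If the paper's $M_\lambda$ convention is subtler (for example, requiring the centre $c$ of the enclosing ball to be arbitrary versus $c\in\bb{R}^n$), Step 2 must be rechecked, but the two-point diameter bound suffices in either reading.
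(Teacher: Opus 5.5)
Your Step~1 is not just delicate, it is false, and Steps 2 and 3 rest on it. The identity $\|q+t\eta-y\|^2=\|q-y\|^2-2t\langle\eta,y\rangle+t^2$ is correct. However, it only compares $y$ against $\|q-y\|$. To get $d(q+t\eta)=\sqrt{d_{X_L}(q)^2+t^2}$ and $m(q+t\eta)=m_{X_L}(q)$ you would need $\|q-y\|\geq d_{X_L}(q)$ for all $y\in X\setminus L$, and nothing forces this. Points of $X$ strictly below $L$ may be closer to $q$ than $X_L$ is. If they approach $X_L$ tangentially to $L$, the penalty $-2t\langle\eta,y\rangle$ is too small for every $t$.

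Concretely, in $\bb{R}^2$ with $L=\{x_1=0\}$ and $q=0$, let $X=\{(0,1)\}\cup\{(-s^2,s-1)\mid 0\leq s\leq\tfrac12\}$. Then $X\subset\{x_1\leq 0\}$ and $X_L=m_{X_L}(q)=\{(0,1),(0,-1)\}$, so the hypothesis holds for every $\lambda<1$. Yet for $y(s)=(-s^2,s-1)$,
\[\|q+t\eta-y(s)\|^2-\|q+t\eta-(0,1)\|^2=s^4+(1+2t)s^2-2s,\]
which is negative for small $s>0$ and strictly convex in $s$. Hence for every $t>0$ the set $m(q+t\eta)$ is a single point of the arc, and the ray $\{q+t\eta\mid t>0\}$ misses $M_X$ entirely. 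The statement is still true here; the medial axis points simply lie off the ray. The same oversight is behind your remark that the component hypothesis is inessential.

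The missing idea, which is how the paper argues, is to give up the exact ray and use only that closest points stay near $m_{X_L}(q)$. Put $R=d_{X_L}(q)$ and let $y'$ be the orthogonal projection of $y$ onto $L$. Take $z=q+s(x-q)+t\eta$ with $x\in m_{X_L}(q)$ and $s\in[0,1]$. Comparing with $\|z-x\|^2=(1-s)^2R^2+t^2$ shows that every $y\in m(z)$ satisfies $\|y'-q\|\leq R$ and $0\leq-\langle\eta,y\rangle\leq R^2/(2t)$.

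These compact sets shrink to $m_{X_L}(q)$ as $t\to\infty$. So for large $t$ they lie in a union of open neighbourhoods $U_\iota$ of the (necessarily finitely many) components of $m_{X_L}(q)$, whose closures are still pairwise more than $2\lambda$ apart. This is where separation of components, and not merely two far-apart points, is used.

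Now take $x_i,x_j$ in different components; then $m(x_i+t\eta)=\{x_i\}$ and $m(x_j+t\eta)=\{x_j\}$. Upper semicontinuity of $m$ and connectedness of $[x_i+t\eta,q+t\eta]\cup[q+t\eta,x_j+t\eta]$ give a point $\xi_t$ on this path with $m(\xi_t)$ meeting two different $U_\iota$. Hence $\xi_t\in M_\lambda(X)$.

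Your Step~3 then works with $\xi_t$ in place of $q+t\eta$. Indeed $d(\xi_t)\geq t$ and $\xi_t-t\eta$ stays within distance $R$ of $q$, so $d(\xi_t)^2-\|p-\xi_t\|^2\geq 2tp_1-C\to\infty$.
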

\begin{proof}

    As before, to simplify the exposition of the proof, assume, without loss of generality, that $q=0\in M_\lambda({X_L})$, $L=\{x\in\bb{R}^n\mid x_1=0\}$, and that $d(q,X\cap L) = 1$. Let us also denote $\eta=(1,0,\ldots,0)$;
    We will show that for any $T>0$ we can find points of $M_\lambda(X)$ in an unbounded cylinder $W_T=\{x\in\bb{R}^n\mid \langle x,\eta\rangle>T, \|x-\langle x,\eta\rangle\eta\|<1\}$.

    Denote
    \[
    X_\delta:=X\cap \{x\in\bb{R}^n\mid d(x,L)\leq\delta\}.
    \]
    We obviously have $X_L=X_0$. Moreover, since $X$ is closed, there is also $\lim_{\delta\to 0}X_\delta = X_0 $.

    Since $dist(A,B)>2\lambda$ for any two distinct connected components $A,B$ of $m_{X_L}(q)$, there exists a collection of open sets $U_\iota$, such that $m_{X_L}(q)\subset \bigcup U_\iota$ and \[\overline{U_i}\cap U_j=U_i\cap \overline{U_j}=\emptyset,\text{ when } i\neq j\] and  points \[x_\iota\in U_\iota\cap m_{X_L}(q).\]
     
    Actually, since the multifunction $m$ has compact values, we can assume that sets $U_\iota$ are still relatively far from each other, meaning $dist(\overline{U_i},\overline{U_j})>2\lambda$ for $i\neq j$.

    Since $X$ is closed, we can find $\delta$ such that $X_\delta\cap \overline{W_{-1}}$ is contained in the disjoint union $\bigcup U_\iota$. Then by choosing $t$ large enough we can ensure that the intervals $[x_i+t\eta,t\eta]$ are mapped by the multifunction $m$ onto $X_\delta\cap \overline{W_{-1}}\subset \bigcup U_\iota.$ 
     
    Now, traversing the segments $[x_i+t\eta,t\eta]$ and $[t\eta,x_j+t\eta]$ with $t>0$ large enough and $j\neq i$, we encounter a discontinuity of the multifunction $m$. The argument for which the discontinuity occurs is a point of the medial axis $M_X$. Moreover, since \[m(x_i+t\eta) = x_i\in U_i \text{ and } m(x_j+t\eta) = x_j \in U_j,\] we can say that for (at least) one of the discontinuity points, say $\xi$, the index $\iota$ for which $m(x)\cap U_\iota\neq \emptyset$ needs to change when $x$ is passing through $\xi$. Since $\lim_{x\to\xi}m(x)\subset m(\xi)$, the set $m(\xi)$ contains points from at least two distinct $U_\iota,U_\kappa$. The minimal radius of a ball that surrounds $m(\xi)$ will be at least equal to $dist(\overline{U_\iota},\overline{U_\kappa})/2>\lambda$.
\end{proof}

When it comes to $\lambda-$reconstructibility of the interior points of $X$, we conjecture.

\begin{conjecture}
    Let $X\subset\bb{R}^n$ be a closed set and let $p\in \conv X$. Choose $\lambda>0$ and assume that $d(p)>\lambda$. Then $p$ is $\lambda-$reconstrucitble.
\end{conjecture}

\subsection{Riemannian manifolds}
Do we have the same effect on the Riemannian manifolds? The change in space in which we let the set $X$ to live has consequences on the medial axis. First, the definition of the medial axis changes. It does not contain only the points with non-unique closest point anymore. It now collects the points with the non-unique shortest path to $X$ (consider a single point on a unit sphere to see the difference). Second, the rationale that involves operations on the scalar product must be transported to the tangent space, or geodesics. Last, one can also question the behaviour (and the definition) of supporting hyper-planes.

Mind, that one of the consequences of Theorem~\ref{Theorem} on a compact manifold $\mathcal{M}$ would be that the whole complement of a closed subset $X\subset\mathcal{M}$ is reconstructible. The author believes that the question is relevant in the context of satellite imagery.

However, the topic of the medial axis on riemannian manifolds is slightly neglected. When it comes to the reconstruction process, the author have found only~\cite{Wolter3D} that describes the process of set reconstruction on a $3-$dimensional manifold.

\section{Conclusion}
The ability of shape reconstruction is one of the revered traits of the medial axes. However, as we demonstrated at the beginning of the article, not all shapes are fully reconstructed. Therefore, we have defined and investigated the reconstructible points of the set complement $X^c$. The study shows that the reconstructible points of $X^c$ are precisely those of the convex hull of $X$ and of the half-spaces $L^+$, where $L$ is a supporting hyperplane of $\overline{X}$ and $X \cap L\neq \overline{\conv X}\cap L $. Some variants of the stability of the reconstructible points. 

We proposed the analogue of the reconstructible points for the $\lambda-$medial axis. However, the proofs of our theorems do not adapt easily to the $\lambda-$medial axis. For the $\lambda-$ medial axis, only a weak version of Proposition~\ref{Prop.Polprzestrzenie} was proved. We also propose a conjecture about the $\lambda-$reconstructible points in the convex hull of $X$. Lastly, we hint at the feasibility of the topic on Riemannian manifolds.

\section*{Acknowledgements}

The author was supported by a subvention to maintain research potential at AGH University of Kraków.
\bibliography{bibliography}

@article{Albano2,
	author = {Albano, Paolo},
	journal = {Nonlinear Anal.-Theor.},
	number = {C},
	pages = {398-405},
	title = {On the cut locus of closed sets},
	volume = {125},
	year = {2015},
    doi = {10.1016/j.na.2015.06.003}
}

@Inbook{Attali2009,
  author    = "Attali, Dominique
               and Boissonnat, Jean-Daniel
               and Edelsbrunner, Herbert",
  editor    = "M{\"o}ller, Torsten
               and Hamann, Bernd
               and Russell, Robert D.",
  title     = "Stability and Computation of Medial Axes - a State-of-the-Art Report",
  bookTitle = "Mathematical Foundations of Scientific Visualization, Computer Graphics, and Massive               Data Exploration",
  year      = "2009",
  publisher = "Springer Berlin Heidelberg",
  address   = "Berlin, Heidelberg",
  pages     = "109--125",
  doi       = "10.1007/b106657_6"}

@article{BiaDenk_Kuratowski,
  author    = "Adam Białożyt and Anna Denkowska and Maciej Denkowski",
  title     = "The Kuratowski convergence of medial axis and conflict sets",
  journal   = "Ann. Sc. Norm. Super. Pisa Cl. Sci",
  pages     = "21",
  year      = "2024",
  doi       = "10.2422/2036-2145.202310_002"
}

@article{Bial1,
  title     = {The tangent cone, the dimension and the frontier of the medial axis},
  author    = {Adam Białożyt},
  journal   = {Nonlinear Differential Equations and Applications NoDEA},
  year      = {2020},
  volume    = {30},
  pages     = {1-29},
  doi       = {10.1007/s00030-022-00833-9}
}

@article{BirbrairDenkowski,
  author    = {Lev Birbrair and Maciej Denkowski},
  title     = {Medial axis and Singularieties},
  journal   = {J. Geom. Anal.},
  volume    = {27},
  number    = {3},
  year      = {2017},
  pages     = {2339-2380},
  doi       = {10.1007/s12220-017-9763-x}
}

@misc{BirbrairDenkowskiErratum,
  title     = {Erratum to: Medial axis and singularities}, 
  author    = {Lev Birbrair and Maciej Denkowski},
  year      = {2017},
  eprint    = {1705.02788},
  archivePrefix={arXiv},
  primaryClass={math.MG},
  doi       = {10.48550/arXiv.1705.02788}
}

@Inbook{Chang2001,
  author    = "Chang, Y.-C.
              and Kao, J.-H.
              and Pinilla, J. M.
              and Dong, J.
              and Prinz, F. B.",
  editor    ="Kimura, Fumihiko",
  title     ="Medial Axis Transform (MAT) of General 2D Shapes and 3D Polyhedra for Engineering Applications",
  bookTitle ="Geometric Modelling: Theoretical and Computational Basis towards Advanced CAD Applications. IFIP TC5/WG5.2 Sixth International Workshop on Geometric Modelling December 7--9, 1998, Tokyo, Japan",
  year      ="2001",
  publisher ="Springer US",
  address   ="Boston, MA",
  pages     ="37--52",
  doi       ="10.1007/978-0-387-35490-3_3",
}

@article{ChazalLieutier,
  title     = {The $\lambda$-medial axis},
  author    = {Frederic Chazal and Andre Lieutier},
  journal   = {Graph. Models},
  year      = {2005},
  volume    = {67},
  pages     = {304-331},
  doi       = {10.1016/j.gmod.2005.01.002}
}

@article{Erdos,
  author    = "Erdös, Paul",
  fjournal  = "Bulletin of the American Mathematical Society",
  journal   = "Bull. Amer. Math. Soc.",
  number    = "10",
  pages     = "728--731",
  publisher = "American Mathematical Society",
  title     = "Some remarks on the measurability of certain sets",
  volume    = "51",
  year      = "1945",
  doi       = "10.1090/S0002-9904-1945-08429-8"
}

@article{Federer,
  author    = {Federer, Herbert},
  title     = {Curvature measures},
  journal   = {Trans. Amer. Math. Soc.},
  number    = {93},
  year      = {1959},
  pages     = {418-481},
  doi       = {10.2307/1993504}
}

@article{Fremlin, 
    title   = {Skeletons and central sets}, 
    volume  = {74}, 
    number  = {3}, 
    journal = {P. Lon. Math. Soc.}, 
    publisher={Cambridge University Press}, 
    author  = {Fremlin, David}, 
    year    = {1997}, 
    pages   = {701–720},
    doi     = {10.1112/S0024611597000233}
}

@article{GiblinReconstruction,
  author    = {P. J. Giblin and J.P. Warder},
  title     = {Reconstruction from Medial Representations},
  journal   = {The American Mathematical Monthly},
  volume    = {118},
  number    = {8},
  pages     = {712--725},
  year      = {2011},
  doi       = {10.4169/amer.math.monthly.118.08.712}
}

@inproceedings{Giesen,
  author    = {Giesen, Joachim and Miklos, Balint and Pauly, Mark and Wormser, Camille},
  title     = {The scale axis transform},
  year      = {2009},
  publisher = {Association for Computing Machinery},
  address   = {New York, NY, USA},
  doi       = {10.1145/1542362.1542388},
  booktitle = {Proceedings of the Twenty-Fifth Annual Symposium on Computational Geometry},
  pages     = {106–115},
  numpages  = {10}
}

@article{Houston,
title = {A Bose type formula for the internal medial axis of an embedded manifold},
journal = {Differential Geometry and its Applications},
volume = {27},
number = {2},
pages = {320-328},
year = {2009},
doi = {https://doi.org/10.1016/j.difgeo.2009.01.002},
author = {Kevin Houston and Martijn {van Manen}}
}

@book{RockafellarWets,
    title   = {Variational analysis.},
    author  = {Ralph Tyrrell Rockafellar and Roger Wets},
    year    = {2009},
    publisher= {Springer},
    address = {Berlin Heidelberg},
    doi     = {10.1007/978-3-642-02431-3}
}

@incollection{Blum,
  author    = {Harry Blum},
  booktitle = {Models for the Perception of Speech and Visual Form},
  title     = {{A} {T}ransformation for {E}xtracting {N}ew {D}escriptors of {S}hape},
  address   = {Cambridge},
  publisher = {MIT Press},
  year      = {1967},
  pages     = {362--380},
}

@INPROCEEDINGS{Wolter3D,
  author    = {Nass, Henning and Wolter, Franz-Erich and Thielhelm, Hannes and Dogan, Cem},
  booktitle = {2007 International Conference on Cyberworlds (CW'07)}, 
  title     = {Medial Axis (Inverse) Transform in Complete 3-Dimensional Riemannian Manifolds}, 
  year      = {2007},
  pages     = {386-395},
  doi       = {10.1109/CW.2007.55}
  }

@phdthesis{Wolter,
  author    = {Wolter, Franz-Erich},
  year      = {1985},
  school    = {Fechbereich Mathematik der Technischen Universitat Berlin},
  title     = {Cut loci in bordered and unbordered Riemannian manifolds}
}

@Inbook{Younes2019,
  author    = "Younes, Laurent",
  title     = "The Medial Axis",
  bookTitle = "Shapes and Diffeomorphisms",
  year      = "2019",
  publisher = "Springer Berlin Heidelberg",
  address   = "Berlin, Heidelberg",
  pages     = "57--72",
  doi       = "10.1007/978-3-662-58496-5_2"
}

\end{document}